\documentclass[12pt]{article}

\usepackage{amsmath, amssymb, latexsym, amsthm, setspace, color, fullpage}
\usepackage[numbers]{natbib}
\newtheorem{theorem}{Theorem}[section]
\newtheorem{lemma}[theorem]{Lemma}

\newtheorem{corollary}[theorem]{Corollary}
\newtheorem{conjecture}[theorem]{Conjecture}

\theoremstyle{definition}
\newtheorem{definition}[theorem]{Definition}

\newtheorem{remark}[theorem]{Remark}

\def\nul{\varnothing} 
\def\st{\colon\,}   
\def\MAP#1#2#3{#1\colon\,#2\to#3}

\def\SM#1#2{\sum_{#1\in #2}}
\def\FR#1#2{\frac{#1}{#2}}
\def\FL#1{\left\lfloor{#1}\right\rfloor}

\def\esub{\subseteq}
\def\E#1{\left\Vert#1\right\Vert}
\def\NN{{\mathbb N}}

\def\C#1{\left|#1\right|}
\def\Ccut#1{\bigl|#1\bigr|}

\def\Arb{{\rm Arb}}
\def\eps{\epsilon}

\def\nosub{\nsubseteq}
\def\ti{\hfil\break\hglue\the\parindent}

\begin{document}

\title{Decomposition of Sparse Graphs into Forests: The Nine Dragon Tree
Conjecture for $k\le2$}

\author{
Min Chen\thanks{Department of Mathematics, Zhejiang Normal University,
chenmin@zjnu.edu.cn.}\,,
Seog-Jin Kim\thanks{Department of Mathematics Education, Konkuk University,
skim12@konkuk.ac.kr.  Research supported by Basic Science Research Program
through the National Research Foundation of Korea (NRF), funded by the Ministry
of Education, No. 2011-0009729.}\,,
Alexandr Kostochka\thanks{Departments of Mathematics, University of Illinois
and Zhejiang Normal University, kostochk@math.uiuc.edu.  Research supported in
part by NSF grant DMS-1266016.}\,,
\\
Douglas B. West\thanks{Departments of Mathematics, Zhejiang Normal University
and University of Illinois, west@math.uiuc.edu.  Research supported by
Recruitment Program of Foreign Experts, 1000 Talent Plan, State Administration
of Foreign Experts Affairs, China.
}\,,
Xuding Zhu\thanks{Department of Mathematics, Zhejiang Normal University,
xdzhu@zjnu.edu.cn.  Research supported by NSF11171310 and ZJNSF Z6110786.}
}
\date{November 13, 2014}
\maketitle

\vspace{-2.5pc}
\begin{abstract}
For a loopless multigraph $G$, the {\it fractional arboricity} $\Arb(G)$ is
the maximum of $\FR{|E(H)|}{|V(H)|-1}$ over all subgraphs $H$ with at least 
two vertices.  Generalizing the Nash-Williams Arboricity Theorem, the Nine
Dragon Tree Conjecture asserts that if $\Arb(G)\le k+\frac{d}{k+d+1}$, then $G$
decomposes into $k+1$ forests with one having maximum degree at most $d$.
The conjecture was previously proved for $d=k+1$ and for $k=1$ when $d\le6$.
We prove it for all $d$ when $k\le2$, except for $(k,d)=(2,1)$.
\end{abstract}

\baselineskip 16pt

\section{Introduction}

Throughout this paper, we consider loopless multigraphs; this is the model we
mean when we say ``graph''.  A {\it decomposition} of a graph $G$ consists of
edge-disjoint subgraphs with union $G$.  The {\it arboricity} of $G$, written
$\Upsilon(G)$, is the minimum number of forests needed to decompose it.  The
famous Nash-Williams Arboricity Theorem~\cite{NW} states that
$\Upsilon(G)\le k$ if and only if no subgraph $H$ has more than $k(\C{V(H)}-1)$
edges.

The {\it fractional arboricity} of $G$, written $\Arb(G)$, is
$\max_{H\esub G}\FR{\C{E(H)}}{\C{V(H)}-1}$ (Payan~\cite{Payan}).  
Nash-Williams' Theorem states $\Upsilon(G)= \lceil \Arb(G) \rceil$.  If
$\Arb(G)=k+\eps$ (with $k\in\NN$ and $0 < \eps \le 1$), then $k+1$ forests are
needed.  When
$\eps$ is small, one may hope to restrict the form of the last forest, since
$k$ forests are ``almost'' enough to decompose $G$.
The {\it Nine Dragon Tree (NDT) Conjecture}
asserts that one can bound the maximum degree of the last forest in terms of
$\eps$ and $k$.  Call a graph {\em $d$-bounded} if its maximum degree is at
most $d$.

\begin{conjecture}[\rm NDT Conjecture \cite{MORZ}]\label{ndtconjecture}
If $\Arb(G)\le k+\FR d{k+d+1}$, then $G$ decomposes into $k+1$ forests, one
of which is $d$-bounded.
\end{conjecture}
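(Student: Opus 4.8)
\emph{Plan.} The plan is a minimal-counterexample argument organized around a potential-minimizing decomposition into $k+1$ forests. I expect the induction will need a strengthening in which the single bound $d$ is replaced by a function: for a multigraph $G$ and $\MAP f{V(G)}\NN$, say $G$ has a \emph{$\kFFf$-decomposition} if $E(G)$ splits into $k$ forests together with a forest $F$ with $d_F(v)\le f(v)$ for every $v$. I would prove that $G$ has a $\kFFf$-decomposition whenever an $f$-weighted sparseness hypothesis holds — the analogue of $\Arb(G)\le k+\FR d{k+d+1}$, conveniently recorded through the deficiency function $\bsG$ — and then recover the theorem by taking $f\equiv d$, where we may assume $d\ge1$ since $d=0$ is Nash-Williams. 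Passing through Nash-Williams for the remaining $k$ forests, a $\kFFf$-decomposition is the same thing as an $f$-bounded forest $F$ with $\C{E(H)\setminus E(F)}\le k(\C{V(H)}-1)$ for every $H\esub G$: deleting $E(F)$ must destroy every subgraph too dense to be covered by $k$ forests.

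Fix a minimal counterexample $G$ — fewest vertices, then fewest edges, then least $\SM v{V(G)}f(v)$ — and assemble reducible configurations. The routine ones: $G$ is connected, has no vertex of degree $\le k$ (route each of its edges to a distinct one of the $k$ plain forests), and no vertex of degree $k+1$ (send one edge to the bounded forest, which is legal once $f\ge1$). The substantive reductions concern a low-degree vertex lying on an edge cut that peels off a subgraph $H_0$ of very small deficiency $\bsG(H_0)$: delete the peeled part, apply minimality to the remainder with $f$ lowered on the attachment vertices, then re-insert. Working these out forces $G$ to resemble the extremal configurations for the conjecture — roughly, it has no small cuts of the relevant kind and is assembled from dense blocks in which nearly every vertex already has degree close to $d$ in the bounded forest.

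Now the core argument. Since $\Arb(G)\le k+1$, fix a decomposition of $G$ into $k+1$ forests, designate one as $F$, and call $v$ \emph{overfull} if $d_F(v)>f(v)$. Among all such decompositions take one minimizing a potential $\rG$ defined as the total overflow $\SM v{V(G)}\bigl(d_F(v)-f(v)\bigr)^+$, with ties broken to make $F$ as spread out as possible — say by then minimizing $\SM v{V(G)}\binom{d_F(v)}2$ and then maximizing the number of components of $F$. If no vertex is overfull we are done. Otherwise minimality of $\rG$ yields exchange constraints: an edge of $F$ at an overfull vertex cannot be moved into any other forest without closing a cycle there, and longer rotations are blocked as well. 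Starting at an overfull vertex $v_0$, I would grow a reachable set $R\ni v_0$ by alternately following $F$-edges out of reachable vertices and, for each of the $k$ plain forests, the paths that the exchange constraints force it to contain between the endpoints of those edges; bookkeeping the edges this exposes shows each plain forest is nearly spanning inside $R$, while $F[R]$ together with the overflow at $v_0$ pushes past its fair share, so that $\C{E(G[R])}>\bigl(k+\FR d{k+d+1}\bigr)\bigl(\C R-1\bigr)$ — contradicting the hypothesis applied to $H=G[R]$.

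The heart of the difficulty, and the reason only $k\le2$ is handled, is that last exchange-and-count step: with $k+1$ forests one must simultaneously control rotations through all of them, and the accounting that turns a single overfull vertex into a surplus growing linearly in $\C R$ is exactly tight, so the tie-breaking rule in $\rG$ and the recipe for growing $R$ must be calibrated precisely against the constant $\FR d{k+d+1}$. For two or three forests the ways the forced paths can overlap form a short, checkable case analysis and the constant comes out right for every $d$; for $(k,d)=(2,1)$, where $\Arb(G)\le\FR94$ leaves essentially no slack, the rotations need not deliver the required surplus, so that one case is left open, and $k\ge3$ seems to require a genuinely new potential. I would expect almost all of the labor to be in (i) pinning down the reducible-configuration list sharply enough for the extremal structure to emerge and (ii) tuning $\rG$ and the growth of $R$ so the count closes with exactly the constant $\FR d{k+d+1}$.
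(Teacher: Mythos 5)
Your plan takes a genuinely different route from the paper, and the divergence is exactly at the step you flag as the ``heart of the difficulty.'' The paper never starts from a Nash-Williams decomposition and never runs an exchange/rotation argument on a potential-minimizing choice of forests. Instead it defines a potential on \emph{vertex subsets}: $\rho(A)=\sum_{v\in A}\rho(v)+\sum_{e\in E(G[A])}\rho(e)$ with $\rho(v)=(k+1)(k+f(v))$ and $\rho(e)\in\{-(k+1),-(k+1+d)\}$, and the sparseness hypothesis becomes $\rho(A)\ge k^2$ for all nonempty $A$. The minimal counterexample is ordered first by $\C{E(G)}$ and then (tie-break) by \emph{larger} total capacity, so that raising $f$ at a vertex or contracting a low-potential set is always a move to a smaller instance. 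The reduction lemmas (Lemmas~\ref{potkk+}--\ref{mainlem}) then pin down the structure of a minimal counterexample entirely in terms of this potential, and the finish is a discharging argument (Theorem~\ref{disch}): assign each vertex and edge its potential as initial charge, push charge from vertices to edges by three rules keyed to whether the endpoints have capacity $0$, and show all vertices and edges end with nonpositive charge once a few exceptional low-degree configurations (handled separately in Lemmas~\ref{k+2}--\ref{k25a}) are forbidden. This contradicts the total potential being at least $k^2>0$.

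What you sketch for the core is the alternating-path/exchange argument of \cite{MORZ} and \cite{KKWWZ}: fix a decomposition into $k+1$ forests minimizing total overflow, grow a reachable set $R$ from an overfull vertex by rotations blocked in each of the $k$ plain forests, and count edges in $G[R]$ to violate the fractional-arboricity bound. That is precisely the approach that was already known, and it is the one that stalled: \cite{KKWWZ} pushed it to $d=k+1$ for all $k$ and to $k=1$ with $d\le 6$, but could not make the count close for larger $d$. Your assertion that ``for two or three forests the ways the forced paths can overlap form a short, checkable case analysis and the constant comes out right for every $d$'' is therefore the gap: it is stated without a mechanism, and the history of the problem is evidence against it. The paper's contribution is precisely to replace that rotation-and-count step by the subset-potential/discharging machinery, which is what buys the jump from $d\le6$ to all $d$ when $k\le2$. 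Your reduction layer (delete low-degree vertices, peel across sparse cuts, decrease $f$ at attachment vertices, then recurse) is in the same spirit as the paper's Lemmas~\ref{edgeconn}--\ref{mainlem} and would likely survive, but as written the plan would land back in the place \cite{KKWWZ} got stuck rather than reach the paper's result.
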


Our main result implies the NDT Conjecture for $k\le2$, except for the case
$(k,d)=(2,1)$.  That case, and indeed all cases with $d=1$, was proved by
D. Yang~\cite{Yang}.  Hence the proof is now complete for $k\le2$.

We call a decomposition into $k$ unrestricted forests and one $d$-bounded
forest a {\it $(k,d)$-decomposition}.  A graph having a $(k,d)$-decomposition
is {\it $(k,d)$-decomposable}.  Montassier, Ossona de Mendez, Raspaud, and
Zhu~\cite{MORZ} posed the NDT Conjecture, showed that the condition cannot be
relaxed, and proved the conjecture for $(k,d) \in \{(1,1),(1,2)\}$.  Kim,
Kostochka, Wu, West, and Zhu~\cite{KKWWZ} proved the conjecture for $d=k+1$ and
for the case $k=1$ when $d\le6$.  A stronger version of the conjecture requires
the $d$-bounded forest to have at most $d$ edges in each component.  For
$(k,d)=(1,1)$ the conclusions are the same, and for $(k,d)=(1,2)$ the stronger
version was proved in~\cite{KKWWZ}.

A {\it weak $(k,d)$-decomposition} is a decomposition into $k$ forests and one
$d$-bounded graph.  A weaker conjecture states that $\Arb\le k+\FR d{k+d+1}$
guarantees a weak $(k,d)$-decomposition.  Study of the NDT Conjecture was
motivated by problems about weak $(k,d)$-decomposition and
$(k,d)$-decomposition of planar graphs discussed
in~\cite{BKPY,Kleit+,BIS07,BIKS09,BIKS09a,BK+02,G,HH+02,Kleit,Mont,MORZ,WZ}.
These results, some of which are successive reductions of the girth needed to
guarantee $(1,1)$- or $(1,2)$-decompositions of planar graphs, are summarized
in~\cite{KKWWZ}.  Our result implies all of these except the results about
$(2,d)$-decomposition of planar graphs in~\cite{BKPY} and~\cite{G}.

The weaker conjecture was proved for $d>k$ in~\cite{KKWWZ}.  For $d\le k$,
the more restrictive hypothesis $\Arb(G)\le k+\frac{d}{2k+2}$ suffices for
$(k,d)$-decomposability~(\cite{KKWWZ}).  When $d=k+1$ this hypothesis is the
same as that in the NDT, which yields the NDT for $d=k+1$.  Meanwhile, Kir\'aly
and Lau~\cite{KL} showed by other methods that $G$ is weakly
$(k,d)$-decomposable when $\Arb(G)\le k+\FR{d-1}{k+d}$.  For $d<k$, neither of
the results of~\cite{KKWWZ} and~\cite{KL} implies the other.

In the computation of $\Arb(G)$, it suffices to maximize over induced subgraphs.
Letting $G[A]$ denote the subgraph of $G$ induced by a vertex set $A$, and
letting $\E A$ denote $\C{E(G[A])}$, we can rewrite the condition bounding
$\Arb(G)$ as an integer inequality.  In the same format we define a weaker
condition of being {\it $(k,d)$-sparse}:

\begin{center}
\begin{tabular}{c c l}
Condition&&Equivalent constraint (when imposed for $\nul\ne A\esub V(G)$)\\
$\Arb(G)\le k+\FR d{k+d+1}$&&$(k+1)(k+d)\C A-(k+d+1)\E A\ge(k+1)(k+d)$\\
$(k,d)$-sparse&&$(k+1)(k+d)\C A-(k+d+1)\E A\ge k^2$\\
\end{tabular}
\end{center}

In~\cite{KKWWZ}, it was proved that every $(k,d)$-sparse graph is weakly
$(k,d)$-decomposable when $d>k$.  Graphs without weak $(k,d)$-decompositions
were given where the $(k,d)$-sparseness inequality holds for every nonempty
proper vertex set but fails by $1$ on the entire set.  For sufficiency, a
more general model involving ``capacities'' was used to control vertex degrees.

\begin{definition}\label{capacity}
Fix positive integers $k$ and $d$.  A {\it capacity function} on a graph $G$ is
a function $\MAP f{V(G)}{\{0,\ldots,d\}}$.  A {\it $(k,f)$-decomposition} of
$G$ is a decomposition into graphs $F$ and $D$, where $F$ decomposes into $k$
forests and $D$ is a forest having degree at most $f(v)$ at each vertex $v$.
The {\it uniform case} is $f(v)=d$ for all $v\in V(G)$.

In this setting, we define a {\it potential function} $\rho$ on the vertices,
edges, and vertex subsets of $G$.  For a vertex $v$, let
$\rho(v)=(k+1)(k+f(v))$.  For an edge $e$, let $\rho(e)=-(k+1)$ if the
endpoints of $e$ both have capacity $0$, and otherwise $\rho(e)=-(k+1+d)$.
For $A\esub V(G)$, let $\rho(A)=\SM vA \rho(v)+\SM e{E(G[A])} \rho(e)$.
A capacity function $f$ is {\it feasible} if the corresponding potential
function $\rho$ satisfies $\rho(A)\ge k^2$ for every nonempty vertex subset $A$.
\end{definition}


In the uniform case, every vertex has capacity $d$, so every vertex has
potential $(k+1)(k+d)$ and every edge has potential $-(k+1+d)$.  The
feasibility inequality then reduces to precisely the definition of
$(k,d)$-sparseness.  However, in order to obtain $(k,d)$-decomposability, the
hypothesis must be strengthened, because feasibility leaves the possibility of
subgraphs having too many edges to decompose into $k+1$ forests.

\begin{definition}\label{overfull}
A vertex subset $A\esub V(G)$ is {\em overfull} if $\E{A}>(k+1)(\C A-1)$.
\end{definition}

Large overfull sets are forbidden by $(k,d)$-sparseness, but small ones are not.
An overfull set $A$ of size $r$ with $\E{A}=(k+1)(r-1)+1$ satisfies the
$(k,d)$-sparseness inequality when $r\le\frac{k(d+1)}{k+1}$.
For example, $(k,d)$-sparse graphs may have edges with multiplicity $k+2$,
but $(k,d)$-decomposable graphs cannot.  Since $\Arb(G)\le k+\FR d{k+d+1}$
implies that $G$ is $(k,d)$-sparse and has no overfull set, the conjecture
below strengthens the NDT Conjecture. 

\begin{conjecture}\label{feasfullconj}
Fix $k,d\in\NN$.  If $G$ is $(k,d)$-sparse and has no overfull set,
then $G$ is $(k,d)$-decomposable.
\end{conjecture}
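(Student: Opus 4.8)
The plan is to prove the following more flexible statement by a minimal-counterexample argument, recovering Conjecture~\ref{feasfullconj} by taking $f\equiv d$: \emph{if $f$ is a feasible capacity function on $G$ and $G$ has no overfull set, then $G$ has a $(k,f)$-decomposition.} Non-uniform capacities are essential for the induction, since the local structure forced by minimality is most naturally expressed through $f$ and the potential $\rho$, and both feasibility and the absence of overfull sets are inherited by induced subgraphs. So let $(G,f)$ be a counterexample minimizing $|E(G)|$, with ties broken by minimizing $|V(G)|$. Standard reductions show $G$ is connected and has minimum degree at least $k+1$ (a vertex of degree at most $k$ can be deleted and its edges distributed one per forest), and, crucially, since $G$ has no overfull set, Nash-Williams' Theorem~\cite{NW} decomposes $G$ into $k+1$ forests.

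Among all decompositions $F_1,\ldots,F_k,D$ of $G$ into $k+1$ forests, fix one minimizing the total overload $\Phi=\sum_{v\in V(G)}\max\{0,\,d_D(v)-f(v)\}$. If $\Phi=0$ then $D$ respects $f$ and $G$ is $(k,f)$-decomposable, a contradiction; so $\Phi>0$, and we pick a root $r$ with $d_D(r)>f(r)$. The engine of the argument is an accessibility analysis rooted at $r$: moving an edge $rw\in D$ into some $F_i$ strictly decreases $\Phi$ unless $r$ and $w$ already lie in the same component of $F_i$, in which case $F_i+rw$ has a unique cycle, and re-routing around it (moving a cycle edge $xy$ into $D$ and $rw$ out) again decreases $\Phi$ unless $x$ or $y$ is \emph{$D$-saturated}, i.e.\ $d_D(\cdot)\ge f(\cdot)$. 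A $D$-saturated vertex with positive capacity has a $D$-edge to continue from, so iterating these $\Phi$-nonincreasing exchanges along cycles of the $F_i$ and along $D$ builds a set $A\subseteq V(G)$ of vertices \emph{accessible} from $r$, together with the edges participating in the exchange structure.

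The final step is to contradict either feasibility or the no-overfull hypothesis from the structure of $A$. The point of accessibility is that $A$ is ``closed'': each $F_i$ restricted to $A$ is a spanning tree of $G[A]$, so contributes $|A|-1$ edges, and every vertex of $A$ is $D$-saturated. Writing $n=|A|$, $S=\sum_{v\in A}f(v)$, and $b$ for the number of $D$-edges with exactly one endpoint in $A$, the identity $2e_D^{\mathrm{in}}+b=\sum_{v\in A}d_D(v)\ge S$ bounds from below the number $e_D^{\mathrm{in}}$ of $D$-edges inside $A$; then, since all but a few edges $e$ have $|\rho(e)|=k+1+d$,
\[
\rho(A)=(k+1)\!\!\sum_{v\in A}(k+f(v))-\!\!\!\sum_{e\in E(G[A])}\!\!\!|\rho(e)|\;\le\;(k+1)(kn+S)-(k{+}1{+}d)\Bigl(k(n-1)+\tfrac{S-b}{2}\Bigr),
\]
which, using $S\le dn$, reduces to an expression that is smaller than $k^{2}$ once $b$ is small. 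This is the heart of the matter: when $b$ is large, the saturating $D$-edges leaving $A$ must be handled, and one shows that doing so either enlarges $A$ (so the estimate is reapplied) or exposes an overfull set, contradicting the hypothesis; the capacity-$0$ edges, for which $|\rho(e)|=k+1$ only, are dealt with separately.

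I expect the main obstacle to be exactly this concluding bookkeeping, and above all making the accessibility machinery and the inequality above work \emph{uniformly in $k$}. With few forests the possible accessible configurations and their boundary $D$-edges can be enumerated and the estimate verified by cases — that is how the range $k\le2$ is settled in this paper, and $d=1$ by Yang~\cite{Yang} — but for general $k$ the interaction among $k$ spanning trees inside $A$, together with the weakness of the hypothesis in the regime $d\le k$, makes the case analysis explode. A genuinely more global argument, perhaps exploiting submodularity or matroid union as in Kir\'aly--Lau~\cite{KL}, seems to be required, which is why Conjecture~\ref{feasfullconj} remains open in full generality.
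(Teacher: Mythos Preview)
This statement is a \emph{conjecture} in the paper; the paper does not prove it in general, only the case $k\le 2$ with $d\ge k$ (Theorem~\ref{main}). Your write-up is explicitly a plan rather than a proof, and you yourself concede at the end that the argument does not close. So there is no complete proof here to validate; let me instead compare your intended route with what the paper actually does, and point to the genuine gap in your plan.

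\medskip
\textbf{Your approach is not the paper's.} You describe the $k\le2$ case as being settled by ``enumerating accessible configurations and their boundary $D$-edges''; that is not what happens in this paper. The paper never fixes a $(k{+}1)$-forest decomposition or runs an exchange/augmenting process. Instead, it works entirely with the potential $\rho$ on a smallest counterexample: a sequence of contraction/reduction lemmas (Lemmas~\ref{potkk+}, \ref{nofull}, \ref{cobound}, \ref{mainlem}) forces strong structural constraints, and then a discharging argument (Theorem~\ref{disch}, with Lemmas~\ref{k+2}--\ref{k25a} handling the residual cases for $k\le2$) shows the total potential is nonpositive, contradicting $\rho(V(G))\ge k^2$. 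The overload-minimizing exchange machinery you outline is much closer in spirit to later work on the full NDT, not to this paper.

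\medskip
\textbf{The concrete gap in your plan.} The load-bearing sentence is ``each $F_i$ restricted to $A$ is a spanning tree of $G[A]$, so contributes $|A|-1$ edges.'' Nothing in your accessibility definition forces this. Accessibility from $r$ via cycle-exchanges in the $F_i$ and traversals in $D$ tells you that certain vertices are reachable by $\Phi$-nonincreasing moves; it does not tell you that every $F_i$ is connected on $A$, nor that no $F_i$-edges leave $A$. Without that, the edge count $k(n-1)+e_D^{\mathrm{in}}$ inside $A$ collapses, and your displayed inequality for $\rho(A)$ has no content. The related issues you flag (large $b$, edges with both endpoints in $V_0$) are downstream of this; even granting perfect bookkeeping there, the spanning-tree claim is the missing idea. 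In exchange-style proofs that do go through, establishing the right closure property of the explored set is exactly the hard step, and it typically requires a much more careful definition of the exploration than ``iterate $\Phi$-nonincreasing exchanges.''
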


Our main result proves a more general statement in the case $k\le2$;
capacity functions facilitate the proof.


\begin{theorem} \label{main}
For $k\le2$ and $d\ge k$, if $f$ is a feasible capacity function defined on
$G$, and $G$ has no overfull set,
then $G$ is $(k,f)$-decomposable.
\end{theorem}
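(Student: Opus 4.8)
The plan is to argue by minimal counterexample, choosing $(G,f)$ so that $\C{V(G)}+\C{E(G)}$ is minimum among feasible capacity functions on graphs with no overfull set that are not $(k,f)$-decomposable; in particular $G$ is connected and every proper subgraph (with the inherited or slightly modified capacity) is $(k,f)$-decomposable. First I would record the easy reductions: if some vertex $v$ has degree at most $k$, delete it, decompose $G-v$, and add back the edges at $v$ — at most $k$ of them, one to each forest in $F$ — so $G$ has minimum degree at least $k+1$; similarly if $f(v)>0$ for a vertex $v$ with small degree one can absorb an edge into $D$. Next I would use feasibility applied to small sets: since $\rho(\{v\})=(k+1)(k+f(v))\ge k^2$ automatically, the binding constraints come from sets of moderate size, and together with the no-overfull-set hypothesis these pin down the local structure (multiplicities of edges, degrees of low-capacity vertices). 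The combinatorial heart is to take the hypothetical decomposition of $G-e$ or $G-v$ and perform \emph{augmenting-path} exchanges among the $k+1$ forests to free up room for the returned edge(s) without creating a cycle in any forest or violating the degree bound $f$ in $D$.

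The key steps, in order, would be: (1) set up the minimal counterexample and the reductions to minimum degree $\ge k+1$ and to a vertex $v$ realizing whatever extremal degree/capacity configuration the potential inequalities force; (2) delete $v$ (or a carefully chosen edge $e$) to get $(G',f')$ with $f'$ still feasible — this is where the exact value $k^2$ in the feasibility bound and the definition of $\rho$ on edges with both endpoints of capacity $0$ must be checked to interact correctly, e.g. one may need to raise capacities elsewhere to preserve $\rho(A)\ge k^2$; (3) invoke minimality to get a $(k,f')$-decomposition $F'_1,\ldots,F'_k,D'$ of $G'$; (4) re-insert the deleted edges, using exchange arguments: for each returned edge $xy$, if it can go into some $F'_i$ without a cycle, do so; otherwise every $F'_i\cup xy$ has a cycle, so the $x$-$y$ path in each $F'_i$ plus the $x$-$y$ path in $D'$ (if $D'$ has one) together identify a structure whose edge count, via the potential function on the union of the relevant vertex sets, contradicts either feasibility or the no-overfull-set condition; (5) handle the capacity constraint on $D'$ by rerouting a $D'$-edge at a saturated vertex into one of the forests along an augmenting path, again controlled by $\rho$.

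The main obstacle — and the reason $k\le2$ is needed — is step (4)–(5): the exchange/augmenting-path analysis must simultaneously respect $k$ cycle-freeness conditions and one bounded-degree condition, and the counting that derives a contradiction relies on a potential-function inequality that is tight only for small $k$. Concretely, when an edge cannot be placed, one obtains a vertex set $A$ carrying a near-spanning union of $k$ trees plus a bounded-degree forest, and one must show $\rho(A)<k^2$; the slack in this computation shrinks as $k$ grows, and for $k\ge3$ the naive bound fails, which is why the theorem is restricted to $k\le2$. A secondary difficulty is bookkeeping the case $f(v)=0$ versus $f(v)>0$: the two edge-potential values $-(k+1)$ and $-(k+1+d)$ mean that deleting or moving an edge changes $\rho$ by different amounts depending on endpoint capacities, so the reductions in steps (1)–(2) must be stated with care to keep $f'$ feasible. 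I expect the bulk of the paper to be a case analysis of exactly which minimal configurations survive the reductions, followed in each case by an explicit sequence of forest-exchange moves.
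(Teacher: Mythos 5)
Your plan diverges from the paper's proof in its central mechanism, and the steps you leave as sketches are the ones the paper had to work hardest to make precise. To begin with, your minimal-counterexample ordering (minimize $\C{V(G)}+\C{E(G)}$) is not the paper's: the paper lets $(G',f')$ precede $(G,f)$ if $\C{E(G')}<\C{E(G)}$, \emph{or} if $G'=G$ and $\sum_v f'(v)>\sum_v f(v)$. That second clause --- raising a capacity on the same graph produces a smaller instance --- is what drives the degree bound $d_G(v)\ge k+1+f(v)$ for $f(v)<d$ (Lemma~\ref{degree}) and several subsequent reductions; your ordering cannot see it, since raising a capacity leaves $\C{V}+\C{E}$ unchanged. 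More seriously, your ``combinatorial heart'' is an augmenting-path exchange among the $k+1$ forests to re-insert a deleted edge or vertex, with the contradiction to come from showing that the vertex set traced by those exchanges has potential below $k^2$. The paper never does this. Its core reductions \emph{contract} vertex sets of low potential to a single vertex of capacity $0$ or small capacity (Lemmas~\ref{contract}, \ref{contract0}, \ref{nofull}, \ref{cobound}, \ref{mainlem}), and the decompositions of the contracted graph and of the induced subgraph are glued directly, with no exchange at all, because cycle-freeness and the degree bound pass through contraction. You have not exhibited the set $A$ or the count in your step (4)--(5), and there is real doubt that a bare exchange argument respects the capacity constraint on $D$ while simultaneously preserving $k$ cycle-free conditions.

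The endgame you envision is also missing. The paper closes with a discharging argument (Theorem~\ref{disch}): every vertex and edge carries charge equal to its potential, so the total is $\ge k^2>0$, and the discharging rules force every final charge nonpositive unless one of a short list of bad local configurations is present. Lemmas~\ref{k+2}, \ref{k2cap0}, \ref{k25a} then eliminate those configurations from a smallest counterexample when $k\le2$. Your explanation for the restriction to $k\le2$ (``the slack in the potential computation shrinks as $k$ grows'') is not what happens: Theorem~\ref{disch} itself works for all $d\ge k$, and $k\le2$ enters only in the reducibility lemmas for the leftover configurations --- in Lemma~\ref{k+2}, for instance, one needs $r\ge k+1$ edges from $x$ to $A'$ where $x$ has only three available neighbors in $A'$, so $k\le2$ is forced there. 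The plan as written is therefore not a reconstruction of the paper's argument, and the parts you leave open are precisely the parts most likely to break.
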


\begin{remark}
The statement for general capacity functions contains the statement for the
uniform case, but the uniform case also implies the general case.  This was
shown in~\cite{KKWWZ} for a similar potential function; we explain it here more
simply.

Capacity $f(v)$ on $v$ can modeled in the uniform case by adding $d-f(v)$
neighbors of $v$, each with $k+1$ edges to $v$ and having no other neighbors.
Each such neighbor forces an edge at $v$ into $D$.  When $u$ with capacity $d$
is added as such a neighbor of $v$ and $f(v)$ is increased by $1$ to allow the
added edge in $D$, the old potential of a set $A$ containing $v$ equals the new
potential of $A\cup\{u\}$ (we gain and lose $(k+1+d)(k+1)$).

Thus feasibility in the general case is equivalent to feasibility of the
corresponding augmented sets in the uniform case, and the existence of the
desired decompositions is also equivalent.  Nevertheless, the general result is
easier to prove: capacity functions facilitate reserving an edge for the
$d$-bounded forest $D$ while reducing the number of edges, by deleting the edge
and reducing the capacity of its endpoints.

Under the potential function used in~\cite{KKWWZ}, every edge has potential
$-(k+1+d)$.  The definition here makes more capacity functions feasible and
hence applies more generally.
\end{remark}

%
%
%
%

Our approach to proving Theorem~\ref{main} is to restrict the form of a
smallest counterexample.  The restrictions we prove are valid for general
$(k,d)$.  For example, in a smallest counterexample the only nonempty proper
vertex subsets with potential at most $k(k+1)$ consist of single vertices with
capacity $0$ (Lemma~\ref{potkk+}).  Furthermore, when $A$ is a proper vertex
subset with $\rho(A)\le k(k+1)+d$ and $|A|\ge2$, every vertex of $A$ having
a neighbor outside $A$ must have capacity $0$ (Lemma~\ref{mainlem}).

We then use discharging, restricting the argument to $d\ge k$.  The initial
charge of each vertex or edge is its potential.  Hence the total charge is at
least $k^2$, but after vertices give their charge to incident edges, all
vertices and edges in an instance satisfying the reductions have nonpositive
charge, if additional constraints on vertices of capacity $0$ or $d$ hold.
Those constraints hold automatically when $k=1$, and additional lemmas in
Section~\ref{sec:k2} show that they also hold when $k=2$ if $d>1$.

\section{General Reduction Lemmas}\label{sec:reduc}

Given fixed $(k,d)$, an \emph{instance} of our problem is a pair $(G,f)$ such
that $G$ has no overfull set and $f$ is feasible on $G$.  We speak of $\rho$ in
Definition~\ref{capacity} as the potential function for the pair $(G,f)$.  We
need to place the instances $(G,f)$ in order (actually a partial order).

\begin{definition}\label{order}
Given two instances $(G,f)$ and $(G',f')$, say that $(G',f')$ is
\emph{smaller than} $(G,f)$ if (1) $\C{E(G')}<\C{E(G)}$ or if (2) $G'=G$ and
$\sum_v f'(v)>\sum_v f(v)$.
\end{definition}

A counterexample is an instance $(G,f)$ such that $G$ has no
$(k,f)$-decomposition.  Throughout our discussion, {\it $(G,f)$ is assumed to
be a smallest counterexample} in the sense of Definition~\ref{order}.  To
restrict the form of $(G,f)$, we construct a smaller instance $(G',f')$ and use
the guaranteed $(k,f')$-decomposition of $G'$ to obtain a $(k,f)$-decomposition
of $G$.  Showing that $(G',f')$ is a smaller instance includes showing that it
has no overfull set and that its potential function $\rho'$ is feasible.

Given an instance $(G,f)$, we write $(G[A],f)$ for the instance where $f$ is
restricted to $A$.

\begin{lemma}\label{induced}
If $A$ is a proper subset of $V(G)$, then $G[A]$ is $(k,f)$-decomposable.
\end{lemma}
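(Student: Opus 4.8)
The plan is to exploit minimality. Since $(G,f)$ is a smallest counterexample, any strictly smaller instance has a $(k,f')$-decomposition. So the first thing to check is that $(G[A],f)$ — meaning $G[A]$ with $f$ restricted to $A$ — is indeed a legitimate \emph{smaller} instance in the sense of Definition~\ref{order}. Since $A$ is a proper subset of $V(G)$ and $G$ is connected in the relevant sense (or at least $A\neq V(G)$ forces $\C{E(G[A])}\le\C{E(G)}$, with equality impossible unless $G$ has isolated vertices outside $A$, which we may dispose of separately), we get $\C{E(G[A])}<\C{E(G)}$, so $(G[A],f)$ is smaller. But ``smaller'' is only useful once we know $(G[A],f)$ is an \emph{instance} at all: I must verify that $G[A]$ has no overfull set and that $f\!\restriction_A$ is feasible for $G[A]$.

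The no-overfull-set condition is immediate: if $B\subseteq A$ were overfull in $G[A]$, i.e. $\E B>(k+1)(\C B-1)$ where $\E B$ is computed in $G[A]$, then since $E(G[A][B])=E(G[B])$ the same $B$ would be overfull in $G$, contradicting that $(G,f)$ is an instance. For feasibility, note that the potential $\rho'$ of the pair $(G[A],f\!\restriction_A)$ agrees with $\rho$ on every subset $B\subseteq A$: the vertex potentials $(k+1)(k+f(v))$ are unchanged, and an edge $e\in E(G[B])$ has the same endpoints with the same capacities whether viewed inside $G$ or inside $G[A]$, so $\rho'(e)=\rho(e)$. Hence $\rho'(B)=\rho(B)\ge k^2$ for every nonempty $B\subseteq A$, because $f$ is feasible on $G$. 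Thus $f\!\restriction_A$ is feasible on $G[A]$, and $(G[A],f\!\restriction_A)$ is a genuine instance that is smaller than $(G,f)$.

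Since $(G,f)$ is a \emph{smallest} counterexample, the smaller instance $(G[A],f\!\restriction_A)$ cannot be a counterexample, so $G[A]$ admits a $(k,f)$-decomposition, which is exactly the claim. There is essentially no obstacle here: the only point requiring a word of care is the edge-count strict inequality when $G$ might have isolated vertices outside $A$ (handled by noting such vertices can be deleted harmlessly, or by observing that a smallest counterexample has no isolated vertices since deleting one preserves everything), and the book-keeping that the potential function is genuinely ``local'' — which it is, by construction, since $\rho(e)$ depends only on the capacities of the two endpoints of $e$. This lemma is really just the statement that the hypotheses of the problem are inherited by induced subgraphs, packaged so it can be invoked freely in later reductions.
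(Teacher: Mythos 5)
Your proof is correct and takes essentially the same approach as the paper, which simply observes that vertex deletion preserves edge counts and potentials of the remaining subsets, so $(G[A],f)$ is a smaller instance and hence decomposable. Your worry about isolated vertices outside $A$ spoiling the strict edge-count decrease is a legitimate piece of pedantry that the paper's one-line proof also glosses over; it is resolved exactly as you indicate, by noting that a smallest counterexample may be taken to have no isolated vertices (stripping them changes neither the edge count nor the existence of a $(k,f)$-decomposition).
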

\begin{proof}
Vertex deletion does not change edge count or the potential of vertex sets that
remain.  Hence $(G[A],f)$ is a smaller instance and is $(k,f)$-decomposable.
\end{proof}

When $S$ and $T$ are disjoint vertex subsets of a graph, let $[S,T]$ denote the
set of edges with endpoints in $S$ and $T$.  When $S\cup T=V(G)$, the set
$[S,T]$ is an {\it edge cut} of $G$.

\begin{lemma}\label{edgeconn}
$G$ is $(k+1)$-edge-connected (and hence has minimum degree at least $k+1$).
\end{lemma}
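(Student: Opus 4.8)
The plan is to argue by contradiction, pasting decompositions across a small edge cut and invoking minimality through Lemma~\ref{induced}. First I would dispose of the trivial case: a one-vertex graph is loopless, hence has no edges and is vacuously $(k,f)$-decomposable, so a counterexample has at least two vertices and the notion of an edge cut makes sense. It then suffices to show $G$ has no edge cut of size at most $k$, since applying this to the cut $[\{v\},V(G)\setminus\{v\}]$ gives $\deg_G(v)\ge k+1$ for every vertex $v$.

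Suppose for contradiction that $\{S,T\}$ partitions $V(G)$ into nonempty sets with $\C{[S,T]}\le k$. Then $S$ and $T$ are proper nonempty subsets of $V(G)$, so by Lemma~\ref{induced} both $G[S]$ and $G[T]$ are $(k,f)$-decomposable. Fix such decompositions: write $G[S]$ as the edge-disjoint union of forests $\VEC{F^S}1k$ and a forest $D^S$ with $\deg_{D^S}(v)\le f(v)$ for all $v\in S$, and similarly obtain $\VEC{F^T}1k$ and $D^T$ for $G[T]$. Now I would assemble a decomposition of $G$ as follows. Put $D=D^S\cup D^T$; since $S$ and $T$ are disjoint this is a forest, and each vertex lies in exactly one side, so its $D$-degree is unchanged and still at most $f(v)$. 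Enumerate $[S,T]$ as $\VEC e1m$ with $m\le k$, and set $F_i=F^S_i\cup F^T_i\cup\{e_i\}$ for $i\le m$ and $F_i=F^S_i\cup F^T_i$ for $m<i\le k$. Each $F^S_i\cup F^T_i$ is a forest (a disjoint union of forests on disjoint vertex sets), and adding $e_i$ joins a component contained in $S$ to a component contained in $T$, so it creates no cycle; hence every $F_i$ is a forest. Together $\VEC F1k$ and $D$ use each edge of $G$ exactly once, so this is a $(k,f)$-decomposition of $G$, contradicting that $(G,f)$ is a counterexample. Therefore every edge cut of $G$ has at least $k+1$ edges, and the minimum degree statement follows.

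There is no genuine obstacle in this argument; it is essentially a warm-up reduction. The only points needing care are the acyclicity check when a crossing edge is inserted — immediate because its endpoints lie on opposite sides of the cut and hence in different components of $F^S_i\cup F^T_i$ — and the bookkeeping that the at most $k$ crossing edges can be absorbed one apiece into the $k$ unrestricted forests, leaving $D$ untouched so that no capacity constraint is violated. (The same argument also incidentally shows $G$ is connected, taking $m=0$ when $[S,T]=\nul$.)
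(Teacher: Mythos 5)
Your proposal is correct and takes essentially the same approach as the paper: both apply Lemma~\ref{induced} to the two sides of a supposed small edge cut and merge the resulting $(k,f)$-decompositions by distributing the at most $k$ crossing edges one apiece among the $k$ unrestricted forests, leaving $D$ untouched. Your write-up just spells out the acyclicity and capacity checks that the paper leaves implicit.
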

\begin{proof}
Let $[S,T]$ be an edge cut of $G$.  By Lemma~\ref{induced}, $G[S]$ and $G[T]$
have $(k,f)$-decomposi\-tions $(F_S,D_S)$ and $(F_T,D_T)$.  If
$\Ccut{[S,T]}\le k$, then these can be combined by adding one edge of
$[S,T]$ to the union of the $i$th forests in $F_S$ and $F_T$, for
$1\le i\le \Ccut{[S,T]}$.
\end{proof}

Let $N_G(v)$ denote the neighborhood of $v$ in $G$.
Let $V_i=\{v\in V(G)\st f(v)=i\}$.

\begin{lemma}\label{degree}
If $f(v)\!<\!d$, then $d_G(v)\ge k\!+\!1\!+\!f(v)$.
If $f(v)\!>\!0$ and $N_G(v)\!\esub\! V_0$, then $f(v)\!=\!d$.
\end{lemma}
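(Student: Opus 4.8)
Looking at Lemma~\ref{degree}, I need to prove two statements about a smallest counterexample $(G,f)$: first, if $f(v)<d$ then $d_G(v)\ge k+1+f(v)$; second, if $f(v)>0$ and all neighbors of $v$ have capacity $0$, then $f(v)=d$.

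\textbf{Plan for the first statement.} Suppose for contradiction that $f(v)<d$ but $d_G(v)\le k+f(v)$. By Lemma~\ref{edgeconn}, $G$ has minimum degree at least $k+1$, so in fact $k+1\le d_G(v)\le k+f(v)$, forcing $f(v)\ge1$. The idea is to build a smaller instance by deleting $v$. Let $G'=G-v$ and let $f'$ be $f$ restricted to $V(G')$; then $(G',f')$ is a smaller instance (fewer edges) by the argument in Lemma~\ref{induced}, provided I check it is still an instance. It has no overfull set (overfull sets survive vertex deletion since induced subgraphs of $G'$ are induced subgraphs of $G$), and the potential $\rho'$ of any nonempty $A\esub V(G')$ equals $\rho(A)$, so feasibility is inherited. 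Hence $G'$ has a $(k,f')$-decomposition $(F',D')$. Now I add $v$ back together with its at most $k+f(v)$ incident edges. I want to distribute them: at most $k$ edges can be placed into the $k$ forests of $F'$ (one each, and adding a pendant edge to a forest keeps it a forest), and the remaining at most $f(v)$ edges go into $D'$ as edges incident to $v$. Since $v$ had degree $0$ in $D'$ and we add at most $f(v)\le f'(v')$-many... wait, $f(v)$ edges at $v$, and the capacity at $v$ is $f(v)$, so this is allowed; at the other endpoints each gains at most one new $D$-edge, but I must ensure those endpoints do not exceed their capacity --- here is a subtlety, so instead the cleaner route is: put all $d_G(v)\le k+f(v)$ edges so that exactly $\min(d_G(v),k)$ go to the forests of $F$ and the rest (at most $f(v)$) go to $D$; adding a pendant vertex $v$ to a forest keeps it a forest, and adding pendant edges at $v$ to $D$ keeps $D$ a forest and only raises the degree of $v$ in $D$, which is bounded by $f(v)$, while other vertices' $D$-degrees do not increase beyond what is allowed because... actually the pendant edges at $v$ do increase the $D$-degree at the \emph{other} endpoint by one each. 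So I should be careful: I want to send as few edges as possible to $D$. Send $\min(d_G(v),k)$ edges to distinct forests of $F$; this leaves $\max(0, d_G(v)-k)\le f(v)$ edges. But can I instead send all of them to forests of $F$ if $d_G(v)\le k$? Yes. If $d_G(v)>k$, I have between $1$ and $f(v)$ leftover edges that must go to $D$. Each such edge is pendant at $v$ in the new $D$, so $D$ stays a forest; the degree at $v$ becomes at most $f(v)$, fine; the degree at the other endpoint $u$ increases by $1$. This last point is the obstacle I flag below.

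\textbf{Handling the capacity at the other endpoint.} The resolution is to not use $G-v$ directly but to decrement capacities. More precisely: choose which edges at $v$ will go to $D$ --- call this set $R$ with $|R|=\max(0,d_G(v)-k)\le f(v)$. Form $G'=G-v$ and define $f'(u)=f(u)-|\{e\in R: u\in e\}|$ for each $u$, and $f'=f$ elsewhere. I must check $f'\ge0$ (true when every endpoint of an $R$-edge has capacity $\ge1$) and that $(G',f')$ is feasible and overfull-free, and then a $(k,f')$-decomposition of $G'$ extends. But feasibility with \emph{lowered} capacities is not automatic, and endpoints of $R$-edges might have capacity $0$ --- which is exactly where Lemma~\ref{mainlem} and Lemma~\ref{potkk+} (on the structure of low-potential sets in a smallest counterexample) come into play, and where the argument gets delicate. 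I expect this bookkeeping --- choosing $R$ to avoid capacity-$0$ endpoints when possible, and invoking the earlier structural lemmas when it is not --- to be the main obstacle. Note the hypothesis $d\ge k$ (from Theorem~\ref{main}) and the definition $\rho(e)=-(k+1)$ for edges between two capacity-$0$ vertices give extra room.

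\textbf{Plan for the second statement.} Suppose $f(v)>0$, $N_G(v)\esub V_0$, but $f(v)<d$. I would raise the capacity at $v$: let $f'(v)=f(v)+1$ and $f'=f$ elsewhere. This gives an instance $(G,f')$ with the same edge set but larger capacity sum, hence \emph{smaller} in the order of Definition~\ref{order} --- provided $(G,f')$ is still an instance. It has no overfull set (same graph). For feasibility I must check $\rho'(A)\ge k^2$ for all nonempty $A$; the only potentials that change are those of sets containing $v$, and each gains $(k+1)$ from $\rho'(v)-\rho(v)=(k+1)$, while edge potentials are unchanged \emph{except} that edges at $v$ now have a non-capacity-$0$ endpoint --- but wait, they already did, since $f(v)>0$, so edge potentials at $v$ are already $-(k+1+d)$ and stay that way; and other edges in $G[A]$ are unaffected. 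So $\rho'(A)=\rho(A)+(k+1)\ge k^2$ for $v\in A$, and $\rho'(A)=\rho(A)$ otherwise. Thus $(G,f')$ is a smaller instance, so it has a $(k,f')$-decomposition --- but since $f'(v)\ge f(v)$ only at $v$, and we want a $(k,f)$-decomposition, I need the $D$-degree at $v$ to actually be at most $f(v)$, not $f(v)+1$. This is where $N_G(v)\esub V_0$ is used: if the $(k,f')$-decomposition $(F',D')$ uses $d_{D'}(v)=f(v)+1$, then since every neighbor of $v$ has capacity $0$, no edge of $D'$ is incident to any neighbor of $v$ other than via $v$ itself --- so the edges of $D'$ at $v$ form a star, and I can move one of them out of $D'$ into one of the $k$ forests of $F'$. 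I must confirm this keeps the forest property: the moved edge $uv$ joins $v$ to a capacity-$0$ vertex $u$; if adding it to forest $F_i'$ creates a cycle, I try another forest; if it creates a cycle in all $k$ forests, then $u$ and $v$ lie together in a small tight structure, contradicting either $(k+1)$-edge-connectivity or the no-overfull-set condition. Pinning down exactly this last case is the only real work here; I expect it to go through by a short counting argument using Lemma~\ref{edgeconn}.
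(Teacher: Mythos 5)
Your proposal takes a genuinely different route for the first part and has a gap there; for the second part you start with the same move as the paper but then go astray on the final step.

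\textbf{First statement.} You delete $v$, restrict $f$, and try to reinsert $v$. You correctly identify that edges at $v$ that must land in $D$ raise the $D$-degree at the \emph{other} endpoint, which may already be at capacity, and you flag that handling this (by decrementing capacities and re-proving feasibility) is ``the main obstacle.'' That obstacle is real and is not resolved in your write-up; it is exactly why the paper does not delete $v$. The paper instead uses the \emph{same} device you discover for the second part: replace $f$ by $f'$ with $f'(v)=f(v)+1$ (note that if $f(v)=0$ the conclusion already follows from Lemma~\ref{edgeconn}, so we may assume $f(v)>0$). Since $f(v)>0$, no edge at $v$ changes potential class, and vertex potentials only increase, so $(G,f')$ is a feasible instance with the same edge set and larger capacity sum --- smaller by criterion~(2) of Definition~\ref{order}. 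Take a $(k,f')$-decomposition $(F',D')$. If $d_{D'}(v)\le f(v)$ we are done. Otherwise $d_{D'}(v)=f(v)+1$, and if $d_G(v)\le k+f(v)$ then at most $k-1$ of $v$'s edges lie in the $k$ forests of $F'$, so $v$ is isolated in some forest; move one $D'$-edge at $v$ there. Crucially, this move only \emph{removes} an edge from $D'$, so no other vertex's $D$-degree increases --- which dissolves the endpoint-capacity obstacle your deletion approach runs into.

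\textbf{Second statement.} You correctly raise $f(v)$ and check feasibility, but then you reason as though some edges of $D'$ at $v$ could exist and might need to be relocated into $F'$, and you start sketching a cycle/connectivity argument to justify the move. This is wrong direction: if $N_G(v)\esub V_0$ then any edge $vu\in D'$ would give $d_{D'}(u)\ge 1>0=f'(u)$, which is forbidden. Hence $v$ is automatically isolated in $D'$, so $d_{D'}(v)=0\le f(v)$ and $(F',D')$ is already a $(k,f)$-decomposition --- no relocation and no connectivity argument are needed. Your phrase ``the edges of $D'$ at $v$ form a star, and I can move one of them out'' indicates you missed that those edges cannot be present at all. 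In short: your opening move for part two is the paper's for both parts; carrying it through to part one and noticing the forced isolation of $v$ in $D'$ closes both halves without the machinery you were reaching for.
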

\begin{proof}
If either statement fails, then $f(v)<d$.  Raising the capacity of a vertex
with positive capacity does not change the number of edges or the potential of
any vertex subset.  Hence $(G,f')$ is an instance, where $f'(v)=f(v)+1$ and
$f'(u)=f(u)$ for $u\in V(G)-\{v\}$.  By criterion (2), $(G,f')$ is smaller than
$(G,f)$ and hence has a $(k,f')$-decomposition $(F',D')$.

Note that $(F',D')$ is a $(k,f)$-decomposition of $G$ unless $d_{D'}(v)=f(v)+1$.
If also $d_G(v)\le k+f(v)$, then $v$ is isolated in some forest in the
decomposition of $F'$.  Moving one edge of $D'$ incident to $v$ into that
forest yields a $(k,f)$-decomposition of $G$.

If $N_G(v)\esub V_0$, then $v$ is isolated in $D'$ and $(F',D')$ is a
$(k,f)$-decomposition of $G$.
\end{proof}

A vertex subset $A$ is {\it nontrivial} if $2\le \C A\le \C{V(G)}-1$.  In most
applications of the next lemma we take $r=0$ in the statement, in which case
the resulting capacity function $f^*$ is just the restriction of $f$ to $A$.

\begin{lemma}\label{contract}
Let $A$ be a nontrivial vertex set in a graph $H$, with $H'$ obtained by
contracting $A$ to a vertex $z$.  Let $f'(z)=r$ and $f'(v)=f(v)$ for
$v\in V(H')-\{z\}$.  Suppose that $H'$ has a $(k,f')$-decomposition $(F',D')$,
in which $d'(x)$ for $x\in A$ is the number of edges incident to $x$ that
become edges of $D'$ incident to $z$.  If also $H[A]$ has a
$(k,f^*)$-decomposition $(F^*,D^*)$, where $f^*(x)=f(x)-d'(x)$ for $x\in A$,
then $H$ is $(k,f)$-decomposable.
\end{lemma}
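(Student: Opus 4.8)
The plan is to combine the two given decompositions along the cut determined by $A$. We have $(F',D')$ on $H'$, where $A$ has been contracted to $z$, and $(F^*,D^*)$ on $H[A]$. Every edge of $H$ is either an edge inside $A$ (handled by the second decomposition) or an edge with at least one endpoint outside $A$; the latter appear in $H'$ (edges with both endpoints outside $A$ unchanged, edges from $A$ to the outside now incident to $z$). So taking $F = F' \cup F^*$ and $D = D' \cup D^*$ gives an edge-partition of $E(H)$; the only issue is whether $F$ decomposes into $k$ forests and whether $D$ is a forest with the right degree bound. For the degree bound at a vertex $x \in A$: edges of $D$ at $x$ are those in $D^*$ (at most $f^*(x) = f(x) - d'(x)$ of them) plus those edges at $x$ going outside $A$ that landed in $D'$ (exactly $d'(x)$ of them, by the definition of $d'(x)$), for a total of at most $f(x)$. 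At vertices outside $A$ the degree in $D$ is just the degree in $D'$, which is at most $f(v) = f'(v)$. So the capacity constraints are automatic.

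The real content is acyclicity — showing $F$ decomposes into $k$ forests and $D$ is a forest. First I would handle $D$. Both $D'$ and $D^*$ are forests. In forming $D$ from $D'$, each edge of $D'$ incident to $z$ gets re-attached to some specific vertex of $A$; $D$ is obtained from the disjoint union $D' \sqcup D^*$ by then identifying, for each such re-attached edge, its $A$-endpoint with the corresponding vertex of $D^*$. Equivalently, $D = D'/(\text{expand } z \text{ into } D^*)$: we are gluing the tree/forest $D^*$ into the spot occupied by the single vertex $z$. Contracting a connected piece back down recovers $D'$, so $D$ cannot have a cycle unless that cycle lies entirely in $A$ — but then it would be a cycle in $D^*$, contradiction — or the cycle survives contraction of $A$, giving a cycle in $D'$, again a contradiction. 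The same argument applies verbatim to each of the $k$ forests: if $F'$ decomposes as $F_1' \cup \cdots \cup F_k'$ and $F^*$ as $F_1^* \cup \cdots \cup F_k^*$, I set $F_i = F_i' \cup F_i^*$ and observe that a cycle in $F_i$ is either confined to $A$ (hence a cycle in $F_i^*$) or projects to a cycle in $F_i'$ after contracting $A$.

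The one subtlety to be careful about is that contracting $A$ in a forest need not yield a forest in general (if the forest had two edges between $A$ and a common outside vertex, or a path through $A$ connecting two outside vertices that were already connected) — but here we are going the other direction. We are given that $F_i'$ and $D'$ are forests on $H'$, and we are building $F_i$ and $D$ by substituting a forest for the vertex $z$; substituting a forest for a vertex of a forest yields a forest (since contracting the substituted piece recovers the original forest, and contraction of a connected subgraph sends cycles to cycles or kills them only by confining them inside the contracted part). So the direction we need is the safe one. I would phrase this as: $F_i$ and $D$ are each obtained from a forest on $H'$ by replacing $z$ with a forest on $H[A]$, hence are forests.

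The expected main obstacle is purely expository: making the ``substitute a forest for a vertex'' operation precise enough that the acyclicity claim is transparent, in particular tracking how an edge of $H$ between $A$ and $V(H) \setminus A$ sits simultaneously in the two decompositions (it belongs to $F'$ or $D'$ in $H'$ as an edge at $z$, and we reinterpret it in $H$ as an edge at its actual endpoint $x \in A$). There is no real difficulty, only bookkeeping: one must note that $d'(x)$ counts exactly these reinterpreted $D'$-edges at $x$, so the degree accounting closes, and that the $i$th forests are combined index-by-index so that each $F_i$ stays a forest and the total is still only $k$ forests. I would write the proof in three short paragraphs: the edge partition, the degree bound at vertices of $A$ and outside, and the acyclicity via the contraction/expansion observation.
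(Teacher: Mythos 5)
Your proposal is correct and follows essentially the same route as the paper's proof: combine the decompositions index-by-index ($F_i = F_i' \cup F_i^*$, $D = D' \cup D^*$), verify the degree bound at each $x\in A$ using $d'(x)$ plus $f^*(x)=f(x)-d'(x)$, and argue acyclicity by observing that any cycle either lies inside $A$ (giving a cycle in $F_i^*$ or $D^*$) or contracts to a cycle in $F_i'$ or $D'$. If anything, you are slightly more explicit than the paper in separating those two cases for the cycle argument, but the underlying idea is identical.
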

\begin{proof}
Viewing edges at $z$ in $G'$ as the corresponding edges in $G$, define $(F,D)$
by letting $F$ consist of $k$ subgraphs, where the $i$th subgraph
is the union of the $i$th subgraphs in the decompositions of $F'$ and
$F^*$ into forests.  Similarly let $D=D'\cup D^*$.

The $k$ subgraphs in the decomposition of $F$ are forests, as is $D$, because
any cycle would contract to a cycle in the corresponding forest in $F'$ or
$D'$.  Also, $d_D(v)\le f(v)$ for all $v$, because for $v\in A$ the number of
edges incident to $v$ in $D^*$ is at most $f(v)-d'(v)$.
\end{proof}


When counting the edges induced by a vertex set $A$ in a graph $H$
other than $G$, we use $\E A_H$ to avoid confusion.

\begin{lemma}\label{nocap0}
Let $f$ be a feasible capacity function on some graph $H$, and consider
$A\esub V(H)$ with $\C A\ge2$.  Let $A_0=\{v\in A\st f(v)=0\}$.  If
$\E A_H\ge(k+1)(\C A-1)$, then $A_0=\nul$.
\end{lemma}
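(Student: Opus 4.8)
The plan is to argue by contradiction: assume $A_0\ne\nul$ and show that $\rho(A)<k^2$, contradicting feasibility of $f$ on $H$ (Definition~\ref{capacity}). Write $a=\C A$ and $a_0=\C{A_0}$, and split the edges of $H[A]$ into the \emph{light} edges, those with both endpoints in $A_0$ (hence both endpoints of capacity $0$), and the remaining \emph{heavy} edges; let $m=\E{A}_H$ and let $m_0$ be the number of light edges, so that $m_0=\E{A_0}_H$. By the definition of $\rho$, each light edge contributes $-(k+1)$ and each heavy edge contributes $-(k+1+d)$, while $\sum_{v\in A}\rho(v)=(k+1)(ka+F)$ with $F=\sum_{v\in A}f(v)$. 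Collecting terms,
\[
\rho(A)=(k+1)(ka+F)+d\,m_0-(k+1+d)\,m .
\]
Since the coefficient of $m$ is negative, the hypothesis $m\ge(k+1)(a-1)$ lets me replace $m$ by $(k+1)(a-1)$ to obtain an upper bound for $\rho(A)$.

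Next I would bound the two remaining quantities. For $F$ I use the crude estimate $F\le d(a-a_0)$, since only the $a-a_0$ vertices of $A\setminus A_0$ contribute and each has capacity at most $d$. The key step is bounding the number of light edges. Since $A_0$ is nonempty, feasibility applied to the set $A_0$ gives $k^2\le\rho(A_0)=(k+1)ka_0-(k+1)m_0$ (every vertex of $A_0$ has capacity $0$, so $H[A_0]$ has $m_0$ edges, all light), whence $ka_0-m_0\ge k^2/(k+1)>k-1$; as $ka_0-m_0$ is an integer, $m_0\le k(a_0-1)$.

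Substituting the three bounds $m\ge(k+1)(a-1)$, $F\le d(a-a_0)$, and $m_0\le k(a_0-1)$ into the displayed identity and simplifying, the terms involving $d$ collapse to $d(1-a_0)$ and the rest to $(k+1)(k+1-a)$, so $\rho(A)\le(k+1)(k+1-a)+d(1-a_0)$. Since $a\ge2$ and $a_0\ge1$, the right-hand side is at most $(k+1)(k-1)=k^2-1<k^2$, the desired contradiction; hence $A_0=\nul$. I do not expect a genuine obstacle here, as the computation is routine arithmetic. The one point that requires a moment's thought is recognizing that feasibility should be invoked on the sub-subset $A_0$, not on $A$ itself, in order to turn the presence of capacity-$0$ vertices into a usable bound on $m_0$; without that bound, the capacity-$0$ vertices contribute their full potential $k(k+1)$ and the inequality fails to close.
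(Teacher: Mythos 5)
Your proof is correct and follows essentially the same approach as the paper's: bound $\sum_{v\in A}f(v)\le d(\C A-\C{A_0})$, substitute the hypothesis $\E A_H\ge(k+1)(\C A-1)$, and crucially apply feasibility to the sub-subset $A_0$ to bound $\E{A_0}_H\le k(\C{A_0}-1)$, then simplify to get $\rho(A)\le k^2-1$. The paper's displayed chain of inequalities is exactly your computation (in particular, it too uses both $\C A\ge2$ and $\C{A_0}\ge1$ in the final step), so there is no substantive difference.
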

\begin{proof}
If $\C{A_0}\ge1$, then $\rho(A_0)\ge k^2$ is equivalent to
$\E{A_0}_H\le k(\C{A_0}-1)$.  We compute
\begin{align*}
\rho(A)&\le (k+1)(k+d)\C A-(k+1+d)\E A_H - (k+1)d\C{A_0}+d\E{A_0}_H\\
&\le -(k+1)\C A+(k+1+d)(k+1)-d\C{A_0}-dk \le k^2-1,
\end{align*}
where the last inequality uses $\C A\ge2$.  This contradicts the feasibility of
$f$.
\end{proof}


\begin{definition}\label{Acon}
For a nontrivial vertex set $A\esub V(G)$, the \emph{$A$-contraction} of
an instance $(G,f)$ is the pair $(G',f')$, where $G'$ is be obtained from $G$
by shrinking $A$ to a single vertex $z$, and $f'$ is defined on $G'$ by
$f'(z)=0$ and $f'(v)=f(v)$ for $v\in V(G')-\{z\}$.  Edges of $G$ induced by 
$A$ disappear in $G'$, and an edge joining $x\in A$ and $y\notin A$ in $G$ 
becomes an edge joining $z$ and $y$ in $G'$.
\end{definition}

\begin{lemma}\label{contract0}
Let $(G',f')$ be the $A$-contraction of an instance $(G,f)$, where $A$ is a
nontrivial subset of $V(G)$.  If $f'$ is feasible, then $(G,f)$ cannot be a
smallest counterexample.
\end{lemma}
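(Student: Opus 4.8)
The plan is to suppose, toward a contradiction, that $(G,f)$ is a smallest counterexample, and then to recover a $(k,f)$-decomposition of $G$ by combining a decomposition of the $A$-contraction $(G',f')$ with a decomposition of $G[A]$, via Lemma~\ref{contract}. The first task is to certify that $(G',f')$ is an instance smaller than $(G,f)$. Contracting $A$ deletes the edges induced by $A$ (they become loops) and preserves every other edge, so $\C{E(G')}=\C{E(G)}-\E{A}_G$; hence criterion~(1) of Definition~\ref{order} holds provided $A$ spans an edge, which is the only situation arising in the applications. Feasibility of $f'$ is assumed, so the remaining obligation is that $G'$ has no overfull set.

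This last point is the crux, and it is where the choice $f'(z)=0$ earns its keep. Suppose $A'\esub V(G')$ is overfull in $G'$; then necessarily $\C{A'}\ge2$ and $\E{A'}_{G'}\ge(k+1)(\C{A'}-1)$. If $z\notin A'$, then $A'$ induces the same edges in $G'$ as in $G$, so $A'$ is overfull in $G$, contradicting that $(G,f)$ is an instance. If $z\in A'$, then $\C{A'}\ge2$ and $\E{A'}_{G'}\ge(k+1)(\C{A'}-1)$ put us in position to apply Lemma~\ref{nocap0} to the feasible function $f'$ on $G'$, which forces $A'$ to contain no vertex of capacity $0$; this contradicts $f'(z)=0$. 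Thus $G'$ has no overfull set, $(G',f')$ is a smaller instance, and, since $(G,f)$ is a smallest counterexample, $G'$ has a $(k,f')$-decomposition $(F',D')$.

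To finish, I would feed $(F',D')$ into Lemma~\ref{contract} with $H=G$, $H'=G'$, and $r=f'(z)=0$. The key observation is that $D'$ is a forest with $d_{D'}(v)\le f'(v)$ for every $v$, so $d_{D'}(z)\le f'(z)=0$; therefore $z$ is isolated in $D'$, and in the notation of Lemma~\ref{contract} this gives $d'(x)=0$ for every $x\in A$. Consequently $f^*$ is exactly the restriction of $f$ to $A$, and $G[A]$ is $(k,f^*)$-decomposable by Lemma~\ref{induced}, which applies because $A$ is a proper subset of $V(G)$. Lemma~\ref{contract} now assembles a $(k,f)$-decomposition of $G$, contradicting that $(G,f)$ is a counterexample.

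The step I expect to be the main obstacle is verifying that the contraction creates no overfull set; the argument above makes it routine only because Lemma~\ref{nocap0} rules out a near-full set containing a capacity-$0$ vertex, so assigning capacity $0$ to the contracted vertex is essential. A minor technical point is the degenerate case $\E{A}_G=0$, where the contraction loses no edges: there one must either appeal to the context of the application (in which $A$ spans an edge) or dispose of it by a short separate argument.
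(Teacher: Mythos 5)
Your argument reproduces the paper's proof: you rule out overfull sets in $G'$ by Lemma~\ref{nocap0} together with $f'(z)=0$ (and by the absence of overfull sets in $G$ for subsets missing $z$), invoke minimality of $(G,f)$ to get a $(k,f')$-decomposition of $G'$, observe that $z$ is isolated in $D'$ so $f^*=f|_A$ and $G[A]$ is $(k,f^*)$-decomposable by Lemma~\ref{induced}, and assemble via Lemma~\ref{contract}. The one point you flag that the paper leaves implicit---that $\C{E(G')}<\C{E(G)}$ requires $\E{A}\ge1$---is a fair caveat to record, since Definition~\ref{order} compares by edge count when $G'\ne G$; it is exactly the routine check one should make when the lemma is invoked.
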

\begin{proof}
If $f'$ is feasible, then $G'$ has no overfull set containing a vertex of
capacity $0$, by Lemma~\ref{nocap0}.  Hence $G'$ has no overfull set containing
$z$.  Also $G'$ has no overfull set not containing $z$, since $G$ has no
overfull set.  Hence $(G',f')$ is a smaller instance than $(G,f)$.
If $(G,f)$ is a smallest counterexample, then $G'$ has a $(k,f')$-decomposition
$(F',D')$.  By Lemma~\ref{contract}, $G$ is $(k,f)$-decomposable and is not
a counterexample.
\end{proof}

Proving feasibility for the potential function $\rho'$ of the smaller instance
$(G',f')$ means proving $\rho'(A')\ge k^2$ for $A'\esub V(G')$.  We do not need
to mention subsets $A'$ such that $G'[A']=G[A']$ and $f'(v)=f(v)$ for all
$v\in A'$.

\begin{lemma}\label{potkk+}
If $\nul\ne A\subset V(G)$, then $\rho(A)>k(k+1)$ unless $A$ consists of a
single vertex with capacity $0$.
\end{lemma}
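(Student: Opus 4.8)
The plan is to argue by contradiction: suppose $\nul\ne A\subset V(G)$ with $\rho(A)\le k(k+1)$, and $A$ is not a single vertex of capacity $0$. First I would dispose of the case $\C A=1$: a single vertex $v$ has $\rho(v)=(k+1)(k+f(v))$, which is at most $k(k+1)$ only when $f(v)=0$, so in the remaining analysis $\C A\ge2$. Now the idea is that a set with small potential must contain many edges, which interacts badly either with feasibility (via Lemma~\ref{nocap0}) or with the no-overfull-set hypothesis. Concretely, from $\rho(A)\le k(k+1)$ and the crude bound $\rho(v)\le(k+1)(k+d)$, $\rho(e)\le-(k+1)$ we get
\[
k(k+1)\ge\rho(A)\ge (k+1)(k+d)\C A-(k+1+d)\E A,
\]
so $\E A\ge\FR{(k+1)(k+d)\C A-k(k+1)}{k+1+d}$. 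I would then compare this lower bound on $\E A$ with the overfull threshold $(k+1)(\C A-1)$ and with $k(\C A-1)$ to derive a contradiction.

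The cleanest route, I expect, is to first show $\E A\ge(k+1)(\C A-1)$, i.e.\ that $A$ (or a suitable subset) is ``full enough'' to trigger Lemma~\ref{nocap0}. If $A$ contains a vertex of capacity $0$, Lemma~\ref{nocap0} applied to $(G,f)$ would then force $A_0=\nul$, a contradiction — so in that case we are done once the edge bound is established. If $A$ has no vertex of capacity $0$, then every edge induced by $A$ has potential exactly $-(k+1+d)$ and every vertex has potential at least $(k+1)(k+1)$ (since $f\ge1$ on $A$), and I would redo the computation with these sharper values: $k(k+1)\ge\rho(A)\ge(k+1)^2\C A-(k+1+d)\E A$, giving $\E A\ge\FR{(k+1)^2\C A-k(k+1)}{k+1+d}$, and then check that for $d\ge k$ (or even without that restriction) this exceeds $(k+1)(\C A-1)$ when $\C A\ge2$, producing an overfull set and contradicting the hypothesis on $G$. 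Alternatively, if it is cleaner, I would split off $A_0$ exactly as in the proof of Lemma~\ref{nocap0}, bounding $\rho(A)\le\rho(A\setminus A_0)+[\text{contribution of }A_0]$ and using $\rho(A_0)\ge k^2$ to isolate a dense cap-$0$-free part.

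The main obstacle will be the bookkeeping in the case $\C A=2$, where the inequalities are tightest and the constants $k(k+1)$ versus $(k+1)(\C A-1)=k+1$ leave little slack; I would handle $\C A=2$ (and perhaps $\C A=3$) by direct enumeration of the possible multiplicities and capacities, using Lemma~\ref{edgeconn} (minimum degree $\ge k+1$) if needed to rule out degenerate configurations, and then treat $\C A\ge3$ or $\C A\ge4$ uniformly via the monotonicity of the edge bound in $\C A$. A secondary subtlety is making sure the argument respects that $A$ may be all of $V(G)$ minus one vertex — but since the hypotheses ``no overfull set'' and ``$\rho\ge k^2$'' are assumed for $(G,f)$ on all nonempty subsets, and Lemma~\ref{nocap0} is stated for arbitrary $A$, nothing in the argument needs $A$ to be a proper subset beyond the statement's exclusion of $A=V(G)$, which only matters for deducing that $(G,f)$ fails to be a smallest counterexample — here we are only proving a structural fact, so the bound $\rho(A)>k(k+1)$ for proper nonempty $A$ follows directly from feasibility and the no-overfull hypothesis.
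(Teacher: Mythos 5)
Your approach differs from the paper's in a fundamental way, and unfortunately the gap is not repairable by tighter bookkeeping. The paper proves this lemma as a \emph{reducibility} statement: assuming $\rho(A)\le k(k+1)$ with $\C A\ge2$, it forms the $A$-contraction $(G',f')$ (Definition~\ref{Acon}), verifies directly that $f'$ is feasible using the bound $\rho(A)\le k(k+1)=\rho(\{z\})$, and then invokes Lemma~\ref{contract0} to conclude that $(G,f)$ cannot be a smallest counterexample. The minimality of $(G,f)$ — stated just before Lemma~\ref{induced} as a standing hypothesis for all of Section~\ref{sec:reduc} — is doing the real work. You instead try to derive $\rho(A)>k(k+1)$ directly from feasibility and the absence of overfull sets, and your final paragraph explicitly claims the bound ``follows directly from feasibility and the no-overfull hypothesis.'' That claim is false.

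Here is a concrete counterexample to the direct approach. Take $k=d=1$ and let $G$ be the path $a$--$b$--$c$ with $f(a)=f(b)=0$ and $f(c)=1$. One checks that every nonempty vertex subset has potential at least $1=k^2$ (so $f$ is feasible) and that no subset is overfull. Yet $A=\{a,b\}$ is a nonempty proper subset, is not a single vertex, and has $\rho(A)=2+2-2=2=k(k+1)$. Of course $(G,f)$ here is not a counterexample to Theorem~\ref{main} at all (the whole path can go in the single forest $F_1$), which is exactly why the lemma does not apply to it — but it shows that no chain of inequalities starting only from feasibility and no-overfull-set can reach the conclusion $\rho(A)>k(k+1)$.

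There is also a computational error in your proposed derivation. You write $\rho(A)\ge(k+1)(k+d)\C A-(k+1+d)\E A$ as following from the \emph{upper} bounds $\rho(v)\le(k+1)(k+d)$ and $\rho(e)\le-(k+1)$, but upper bounds on summands give an upper bound on the sum, not a lower bound. The correct lower bound uses $\rho(v)\ge k(k+1)$ and $\rho(e)\ge-(k+1+d)$, yielding $\E A\ge\FR{k(k+1)(\C A-1)}{k+1+d}$, which is strictly below the overfull threshold $(k+1)(\C A-1)$ (ratio $\FR{k}{k+1+d}<1$) and also below the threshold $(k+1)(\C A-1)$ needed to trigger Lemma~\ref{nocap0}. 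So even the arithmetic cannot be salvaged. To prove this lemma you must use the minimality of the counterexample via a contraction argument, as the paper does.
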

\begin{proof}
Suppose $\rho(A)\le k(k+1)$.  If $A$ is not a single vertex with capacity $0$,
then $\C A>1$.  Let $(G',f')$ be the $A$-contraction of $(G,f)$.

To prove that $f'$ is feasible, consider $A'\esub V(G')$ with $z\in A'$.  Let
$A^*=(A'-\{z\})\cup A$.  Let $E'={[A-V_0,A'-\{z\}\cap V_0]}$.  In moving from
$G$ to $G'$, the potential of each edge in $E'$ changes from $-(k+1+d)$ to
$-(k+1)$; other edges keep the same potential.  Thus
$$\rho'(A') = \rho(A^*)-\rho(A)+\rho(\{z\})+2\C{E'}\ge \rho(A^*)\ge k^2$$
under the assumption $\rho(A)\le k(k+1)$, since $\rho(\{z\})=(k+1)k$.

Since $f'$ is feasible, Lemma~\ref{contract0} applies, and $G$ is
$(k,f)$-decomposable.
\end{proof}

We say that a set $A$ is \emph{full} when $\E A\ge(k+1)(\C A-1)$.  A full set
of size $2$ is an edge with multiplicity at least $k+1$.  The exclusion of
vertices with capacity $0$ from full sets (Lemma~\ref{nocap0}) helps us to
exclude all full sets when $(G,f)$ is a smallest counterexample. 

\begin{lemma}\label{nofull}
$G$ has no full set $A$ with $\C A\ge2$.
\end{lemma}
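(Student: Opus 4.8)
The plan is to show that a full set $A$ with $\C A\ge2$ cannot exist in a smallest counterexample $(G,f)$, by contracting $A$ and deriving a $(k,f)$-decomposition of $G$ via Lemma~\ref{contract}. First I would observe that by Lemma~\ref{nocap0}, since $A$ is full (so $\E A\ge(k+1)(\C A-1)$), we have $A_0=\nul$; that is, every vertex of $A$ has positive capacity. I would also want to invoke Lemma~\ref{potkk+} and Lemma~\ref{induced}: the induced subgraph $G[A]$ is $(k,f)$-decomposable, and more, I should probably find a sub-decomposition of $G[A]$ that uses the last forest $D^*$ sparingly at the vertices of $A$ that have neighbors outside $A$, so that when I glue in the contracted picture I do not overrun the capacity bound $f(x)$.

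The key steps, in order: (1) Let $(G',f')$ be the $A$-contraction, so $G'$ shrinks $A$ to a vertex $z$ with $f'(z)=0$. Since $A$ is full with $\C A\ge2$ and all vertices of $A$ have positive capacity, removing the $\ge(k+1)(\C A-1)$ edges inside $A$ and all the extra potential from those vertices should make $(G',f')$ a genuinely smaller instance — I must check it has no overfull set and that $f'$ is feasible. For overfullness: $G'$ has no overfull set avoiding $z$ because $G$ does not; for a set $A'\ni z$, the corresponding set $A^*=(A'\setminus\{z\})\cup A$ in $G$ satisfies the count, and because $A$ itself is full, collapsing it can only decrease the edge surplus, so $G'[A']$ is not overfull. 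For feasibility of $f'$: for $A'\ni z$ with $A^*$ as above, $\rho'(A')=\rho(A^*)-\rho(A)+\rho(\{z\})+2\C{E'}$ where $E'$ is the set of edges from $A\setminus V_0$ to $(A'\setminus\{z\})\cap V_0$, exactly as in the proof of Lemma~\ref{potkk+}; here I need $\rho(A)-\rho(\{z\})=\rho(A)-k(k+1)$ to be controlled, which is where fullness enters. A full set has many edges, so $\rho(A)$ is small — in fact $\rho(A)\le (k+1)(k+d)\C A-(k+1+d)(k+1)(\C A-1)$, and I would verify this is at most $k(k+1)=\rho(\{z\})$ when $\C A\ge2$ and $d\ge 0$, so $\rho'(A')\ge\rho(A^*)\ge k^2$. (2) Since $(G',f')$ is a smaller instance, it has a $(k,f')$-decomposition $(F',D')$. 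Record $d'(x)$ for each $x\in A$: the number of edges at $x$ (leaving $A$) that land in $D'$ at $z$; note $\sum_{x\in A}d'(x)=d_{D'}(z)\le f'(z)=0$, so in fact $d'(x)=0$ for all $x\in A$. (3) Apply Lemma~\ref{induced} to get a $(k,f^*)$-decomposition of $G[A]$ with $f^*(x)=f(x)-d'(x)=f(x)$ — so just a $(k,f)$-decomposition of $G[A]$, which exists since $A$ is a proper subset. (4) Apply Lemma~\ref{contract} with $H=G$, $H'=G'$, $r=0$: the hypotheses are exactly met, so $G$ is $(k,f)$-decomposable, contradicting that $(G,f)$ is a counterexample.

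The main obstacle I expect is step (1), specifically verifying that $f'$ is feasible and that $G'$ has no overfull set: the argument hinges on the inequality $\rho(A)\le k(k+1)$ for a full set $A$ with $\C A\ge2$, and then one gets to reuse the machinery of Lemma~\ref{potkk+} almost verbatim. Actually this suggests a slicker route: show directly that a full set $A$ with $\C A\ge2$ satisfies $\rho(A)\le k(k+1)$, and then Lemma~\ref{potkk+} immediately forces $A$ to be a single capacity-$0$ vertex, contradicting $\C A\ge2$. The one subtlety is that Lemma~\ref{potkk+} is about proper subsets; if $A=V(G)$ I would need to handle that case separately — but $G$ is connected with minimum degree $\ge k+1$ by Lemma~\ref{edgeconn}, and a full set equal to $V(G)$ would be an overfull-or-exactly-tight set on all of $V(G)$, which combined with $(k,d)$-sparseness / no overfull hypothesis and $\C{V(G)}\ge2$ can be excluded directly. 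So the real content is the single computation $\rho(A)=(k+1)(k+d)\C A - \sum_{e\in E(G[A])}(k+1+d)$ (using $A_0=\nul$ so every induced edge has potential $-(k+1+d)$) $\le (k+1)(k+d)\C A - (k+1+d)(k+1)(\C A-1)$, and checking this last quantity is $\le k(k+1)$; I would carry out that arithmetic carefully since it is the crux.
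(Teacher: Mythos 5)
Your proposed proof has a genuine gap at its crux, and you correctly flagged that the arithmetic is the crux — but the arithmetic does not come out the way you hope. Using $A_0=\nul$ (Lemma~\ref{nocap0}) and $\E A=(k+1)(\C A-1)$ (equality because $G$ has no overfull set), one gets
\[
\rho(A)\le(k+1)(k+d)\C A-(k+1+d)(k+1)(\C A-1)=(k+1)(k+1+d-\C A).
\]
For $\C A=2$ this equals $(k+1)(k-1+d)=k^2+d(k+1)-1$, which is strictly greater than $k(k+1)=k^2+k$ whenever $d\ge2$. So the claim $\rho(A)\le k(k+1)$ fails in general, which breaks both your ``slicker route'' (you cannot invoke Lemma~\ref{potkk+} directly) and step~(1) of your main route: after the $A$-contraction with $f'(z)=0$, the inequality $\rho'(A')\ge\rho(A^*)$ no longer follows, because $\rho(\{z\})-\rho(A)=k(k+1)-\rho(A)$ can be negative. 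Consequently step~(2) is also unjustified: you cannot conclude $d'(x)=0$ for all $x\in A$ because you cannot build the $f'(z)=0$ contraction in the first place.

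The paper circumvents exactly this by \emph{not} using the $A$-contraction with $f'(z)=0$. It sets $\ell=\rho(A)-k^2$ (which can be as large as $d(k+1)-1$), lets $m=\FL{\ell/(k+1)}$, first shows $f(x)>m$ for every $x\in A$, and then contracts $A$ to $z$ with $f'(z)=m$. The positive budget $m$ at $z$ absorbs the potential surplus when verifying feasibility of $(G',f')$, and the guarantee $f(x)>m$ ensures the residual capacities $f^*(x)=f(x)-g(x)$ on $G[A]$ remain nonnegative with $(G[A],f^*)$ still feasible. The case $A=V(G)$ also needs more care than you give it: a full set is tight, not overfull, so ``no overfull set'' alone does not exclude it; the paper instead deduces from feasibility that $d\ge\C{V(G)}-1$, making the $d$-bound vacuous so that $G$ decomposes into $k+1$ forests outright. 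In short, the correct proof requires a contraction with a nonzero target capacity tuned to the potential excess $\ell$, which your proposal does not supply.
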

\begin{proof}
Since $G$ has no overfull set, we may assume $\E A=(k+1)(\C A-1)$.  By
Lemma~\ref{nocap0}, $A\cap V_0=\nul$.  Hence all edges in $G[A]$ have potential
$-(k+1+d)$.  We compute
$$\rho(A)\le (k+1)(k+d)\C A-(k+1+d)(k+1)(\C A-1) = (k+1)(k+1+d-\C A).$$

If $A=V(G)$, then let $i=\min\{f(v)\st v\in V(G)\}$.  Accounting for one vertex
with capacity $i$ reduces the bound by $(k+1)(d-i)$ to
$\rho(V(G))\le (k+1)(k+1+i-\C{V(G)})$.  Now
$k^2\le \rho(V(G))\le k^2-1+(k+1)(2+i-\C{V(G)})$ yields
$i\ge\C{V(G)}-1$.  This means $d\le\C{V(G)}-1$, so there is no restriction on
the degrees of vertices in the $(k+1)$th forest, and the prohibition of
overfull sets ensures that $G$ decomposes into $k+1$ forests.

Hence we may assume $A\ne V(G)$.  Continuing the computation and using
$\C A\ge2$,
$$\rho(A)\le (k+1)(k+1+d-\C A) \le (k+1)(k-1+d)=k^2+d(k+1)-1.$$
Now let $\ell=\rho(A)-k^2$ and $m=\FL{\FR \ell{k+1}}$; the bound on $\rho(A)$ yields
$m<d$.  If $f(x)\le m$ for some $x\in A$, then adjusting the potential for
this vertex yields
\begin{align*}
\rho(A)&\le(k+1)(k+1+d-\C A)-(k+1)(d-m)\le (k+1)(k-1+m)\le k^2-1+\ell,
\end{align*}
again using $\C A\ge2$.  The contradiction implies $f(x)>m$ for all $x\in A$.

Form $G'$ by contracting $A$ into a new vertex $z$.  Let $f'(z)=m$ and
$f'(v)=f(v)$ for $v\in V(G)-A$.  For any set $A'$ with $z\in A'\esub V(G')$,
replacing $z$ with $A$ adds $\C A-1$ vertices and $(k+1)(\C A-1)$ edges.  Hence
$A'$ is overfull in $G'$ if and only if $A\cup A'$ is overfull in $G$.  Since
$G$ has no overfull set, $G'$ has no overfull set.

For $z\in A'\esub V(G')$, let $A^*=(A'-\{z\})\cup A$.  Since $f(z)>0$ and
$A\cap V_0=\nul$, every edge incident to $z$ in $G'$ has the same potential as
the corresponding edge in $G$.  We compute
\begin{align*}
\rho'(A')-\rho(A^*)&\ge \rho'(z)-\rho(A)=(k+1)(k+m)-k^2-\ell\\
&\ge k+(k+1)\left(\FR \ell{k+1}-1\right)-\ell=-1
\end{align*}
However, Lemma~\ref{potkk+} yields $\rho(A^*)> k(k+1)$, so
$\rho'(A')\ge k(k+1)$.  Hence $(G',f')$ is feasible.

Since $(G',f')$ is smaller than $(G,F)$, there is a $(k,f')$-decomposition
$(F',D')$ of $G'$, with $d_{D'}(z)\le m$.  Let $f^*(x)=f(x)-g(x)$ for $x\in A$,
where $g(x)$ is the number of edges joining $x$ to $V(G)-A$ that become edges
of $D'$ when $A$ is contracted (an edge in $D'$ may have several choices for
which vertex it is assigned to).

Since $f'(z)=m$ and $f(x)>m$ for $x\in A$, we have $f^*(x)>0$ for $x\in A$,
so $f^*$ is a capacity function on $G[A]$.  Since $G$ has no overfull sets,
$\E A-\E X\ge(k+1)(\C A-\C X)$ for $X\esub A$.  The potential of $X$ is
smallest in comparison to that of $A$ when all vertices of $A-X$ have capacity
$d$ and all edges of $D'$ incident to $z$ arise from edges incident to $X$.
Hence
\begin{align*}
\rho^*(X)&\ge\rho(A)-m(k+1)-(k+1)(k+d)(\C A-\C X)+(k+1+d)(k+1)(\C A-\C X)\\
&=\rho(A)-m(k+1)+(k+1)(\C A-\C X)\ge \rho(A)-\ell=k^2,
\end{align*}
using $\C A\ge\C X$ and the definition of $m$ in the last step.

Hence $(G[A],f^*)$ is a smaller instance, and $G[A]$ is $(k,f^*)$-decomposable.
By Lemma~\ref{contract}, $G$ is $(k,f)$-decomposable.
\end{proof}

Lemma~\ref{nofull} forbids edges with multiplicity $k+1$.  Within the set $V_0$,
we can reduce the multiplicity further.  In particular, when $k=1$ a minimal
counterexample must be a simple graph in which $V_0$ is an independent set.
One can also prove that no vertex of $V_0$ has an incident edge of multiplicity
$k$, but we will not need that.

\begin{lemma}\label{cap0}
No two vertices of $V_0$ are joined by $k$ edges.
\end{lemma}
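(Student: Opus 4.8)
The plan is to derive a contradiction directly from the strong restriction already recorded in Lemma~\ref{potkk+}, by exhibiting a two-element set whose potential is exactly $k(k+1)$. Suppose for contradiction that $u$ and $v$ are vertices of $V_0$ joined by $k$ parallel edges, and set $A=\{u,v\}$. First I would pin down $\E A$: the only edges of $G[A]$ are the parallel $u$--$v$ edges, so $\E A\ge k$; and $A$ cannot be a full set of size $2$ by Lemma~\ref{nofull}, so $\E A\le k$. Hence $\E A=k$ exactly.

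Next I would evaluate $\rho(A)$. Both endpoints lie in $V_0$, so $\rho(u)=\rho(v)=(k+1)(k+0)=k(k+1)$; and each of the $k$ edges of $G[A]$ joins two vertices of capacity $0$, so it carries the smaller penalty $-(k+1)$ rather than $-(k+1+d)$. Therefore
$$\rho(A)=2\,k(k+1)-k(k+1)=k(k+1).$$

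Finally I would check the hypotheses of Lemma~\ref{potkk+} and conclude. The set $A$ is nonempty and has two elements, so it is not a single vertex of capacity $0$; and $A$ is a proper subset of $V(G)$, because by Lemma~\ref{edgeconn} the vertex $u$ has degree at least $k+1$, hence (there being no loops) an edge to some vertex outside $A$, so $\C{V(G)}\ge3$. Lemma~\ref{potkk+} then forces $\rho(A)>k(k+1)$, contradicting the equality just computed, and the lemma follows.

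I do not expect a genuine obstacle here: the statement is essentially an immediate corollary of Lemma~\ref{potkk+}, the one substantive observation being that an edge internal to $V_0$ incurs potential $-(k+1)$ instead of $-(k+1+d)$, which is exactly what makes $\rho(\{u,v\})$ drop to the forbidden value $k(k+1)$. The only things needing care are the two routine bookkeeping points, namely that $\E A=k$ (via Lemma~\ref{nofull}) and that $A\neq V(G)$ (via Lemma~\ref{edgeconn}).
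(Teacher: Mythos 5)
Your proof is correct and is essentially the paper's own argument: compute $\rho(\{u,v\})=2k(k+1)-k(k+1)=k(k+1)$ using the reduced edge penalty $-(k+1)$ for edges inside $V_0$, and invoke Lemma~\ref{potkk+}. The paper states this in one line and omits the bookkeeping you supply (that $\E A=k$ exactly and that $A\ne V(G)$), but those are harmless rigorizations rather than a different route.
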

\begin{proof}
If $x$ and $y$ are vertices of capacity $0$ joined by $k$ edges,
then $\rho(\{x,y\})=2k(k+1)-k(k+1)=k(k+1)$, contradicting Lemma~\ref{potkk+}.
%
\end{proof}

\begin{lemma}\label{onedown}
For $x\in V(G)$ with $f(x)>0$, every proper induced subgraph of $G$ containing
$x$ has a $(k,f)$-decomposition $(F,D)$ such that $d_D(x)<f(x)$.
\end{lemma}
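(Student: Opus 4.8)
The plan is to obtain the desired decomposition of a proper induced subgraph $G[A]$ (with $x\in A$) by applying the minimality of $(G,f)$ to the instance on $G[A]$ in which the capacity of $x$ has been lowered by one. Concretely, define a capacity function $f'$ on $A$ by $f'(x)=f(x)-1$ (legitimate since $f(x)>0$) and $f'(v)=f(v)$ for $v\in A\setminus\{x\}$. I claim $(G[A],f')$ is a smaller instance than $(G,f)$. It has no overfull set, since its vertex subsets are subsets of $V(G)$ and $G$ has no overfull set. It has strictly fewer edges than $G$: the set $V(G)\setminus A$ is nonempty, and by Lemma~\ref{edgeconn} every vertex of $G$ has positive degree, so some edge of $G$ is incident to $V(G)\setminus A$ and hence absent from $G[A]$. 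Thus $(G[A],f')$ is smaller by criterion~(1) of Definition~\ref{order}, once we know $f'$ is feasible.

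The one substantive point is feasibility of $f'$ on $G[A]$, which I would check by comparing $\rho'(B)$ with $\rho(B)$ for each nonempty $B\esub A$, where $\rho'$ is the potential for $(G[A],f')$. If $x\notin B$, then every vertex and edge of $G[B]$ has the same potential under $f'$ as under $f$, so $\rho'(B)=\rho(B)\ge k^2$. If $x\in B$, the potential of the vertex $x$ drops by exactly $k+1$; no edge potential changes when $f(x)\ge2$, and when $f(x)=1$ the only edges whose potential changes are those from $x$ into $V_0$, each of which \emph{gains} $d$. In all cases $\rho'(B)\ge\rho(B)-(k+1)$. Since $x\in B$ and $f(x)>0$, the set $B$ is not a single vertex of capacity $0$, and $B$ is a nonempty proper subset of $V(G)$, so Lemma~\ref{potkk+} gives $\rho(B)>k(k+1)$, i.e. $\rho(B)\ge k^2+k+1$. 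Therefore $\rho'(B)\ge k^2$, and $f'$ is feasible.

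Because $(G,f)$ is a smallest counterexample and $(G[A],f')$ is a smaller instance, $G[A]$ has a $(k,f')$-decomposition $(F',D')$. Since $f'(v)\le f(v)$ for every $v\in A$, the pair $(F',D')$ is also a $(k,f)$-decomposition of $G[A]$, and $d_{D'}(x)\le f'(x)=f(x)-1<f(x)$, which is exactly what is required.

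I do not expect a real obstacle here: the only delicate point is the case $f(x)=1$, in which lowering the capacity moves $x$ into $V_0$ and alters the potentials of the edges from $x$ to $V_0$; but those alterations only increase potentials, so the uniform bound $\rho'(B)\ge\rho(B)-(k+1)$ survives, and Lemma~\ref{potkk+} supplies the slack needed to conclude feasibility.
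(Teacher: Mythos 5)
Your proposal is correct and is essentially the paper's proof: lower $f(x)$ by one on the proper induced subgraph, verify feasibility via Lemma~\ref{potkk+} with the observation that passing to the case $f(x)=1$ only increases edge potentials, and invoke minimality. Your write-up is a bit more explicit (spelling out why the induced subgraph has fewer edges, and treating the singleton $B=\{x\}$ uniformly rather than separately as the paper implicitly does), but the approach and the key steps are the same.
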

\begin{proof}
Consider $A$ with $x\in A\subset V(G)$.  If $f(x)>0$, then define $f'$ by
$f'(x)=f(x)-1$ and $f'(v)=f(v)$ for $v\in A-\{x\}$.  For $x\in A'\esub A$ with
$\C{A'}\ge2$, we have $\rho'(A')\ge\rho(A')-(k+1)\ge k^2$, by
Lemma~\ref{potkk+} (if $f(x)=1$, then the inequality may be strict).
Also $A$ has no overfull subset.  Since $(G[A],f')$ is smaller than $(G,f)$,
it has a $(k,f')$-decomposition, which is a $(k,f)$-decomposition such that
$d_D(x)<f(x)$.
\end{proof}

\begin{lemma}\label{adjpos}
If $f(u)>0$ and $d_G(u)=k+1$, then $N_G(u)\esub V_0$.
\end{lemma}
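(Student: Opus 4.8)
Since $(G,f)$ is a smallest counterexample, Lemma~\ref{edgeconn} gives minimum degree at least $k+1$, so in fact $d_G(u)=k+1$ exactly. Suppose, for a contradiction, that some neighbor $w$ of $u$ has $f(w)>0$; I will construct a $(k,f)$-decomposition of $G$ directly. The plan is to delete $u$, decompose what remains using the inductive hypothesis in a form that reserves slack at $w$ for the $d$-bounded forest, and then reinsert $u$, routing one $uw$-edge into $D$ and the remaining $k$ edges at $u$ into the $k$ ordinary forests.

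First I would apply Lemma~\ref{onedown} with $x=w$ to the proper induced subgraph $G-u$ (which contains $w$ since $w\ne u$): this yields a $(k,f)$-decomposition $(F',D')$ of $G-u$ with $d_{D'}(w)<f(w)$. Now reinsert $u$ together with its $k+1$ incident edges. Fix one edge $e_0$ joining $u$ and $w$ (such an edge exists because $w\in N_G(u)$), and place $e_0$ into $D'$; place the other $k$ edges incident to $u$ one apiece into the $k$ forests comprising $F'$. Call the resulting subgraphs $D$ and $F$.

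It remains to check that $(F,D)$ is a $(k,f)$-decomposition of $G$. Each forest of $F$ receives exactly one edge incident to the new vertex $u$, which was isolated there, so no cycle is created; likewise $D'\cup\{e_0\}$ is a forest because $e_0$ is a pendant edge at $u$. For capacities, $d_D(u)=1\le f(u)$ since $f(u)>0$, and $d_D(w)=d_{D'}(w)+1\le f(w)$ by the choice of $(F',D')$; all other degrees in $D$ are unchanged. Thus $G$ is $(k,f)$-decomposable, contradicting that $(G,f)$ is a counterexample, and so every neighbor of $u$ lies in $V_0$.

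The only real content is recognizing that Lemma~\ref{onedown} supplies precisely the slack needed at $w$; everything else is bookkeeping. The one place the hypothesis $d_G(u)=k+1$ is genuinely used is in matching edge count to forest count: after committing one edge at $u$ to $D$, exactly $k$ edges remain for the $k$ forests of $F$, so no forest is forced to take two edges at $u$ (which could not create a cycle anyway) and, more to the point, there is no leftover edge with nowhere feasible to go.
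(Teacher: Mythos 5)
Your proposal is correct and takes essentially the same approach as the paper: apply Lemma~\ref{onedown} to $G-u$ with the positive-capacity neighbor to reserve slack in $D$, then reinsert $u$, routing one $uw$-edge to $D$ and the remaining $k$ edges one apiece to the $k$ forests. Your write-up is actually cleaner than the paper's (which has a couple of typographical slips, writing $\{x,y\}$ and ``edges at $x$'' where $\{u,x\}$ and ``edges at $u$'' are meant), but the argument is identical.
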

\begin{proof}
If there exists $x\in N_G(u)$ with $f(x)>0$, then by Lemma~\ref{onedown}
$G-u$ has a $(k,f)$-decomposition $(F,D)$ such that $d_{D'}(x)<f(x)$.
Add one edge with endpoints $\{x,y\}$ to $D$ and the other edges at $x$ to
distinct forests in $F$ to complete a $(k,f)$-decomposition of $G$.
\end{proof}

Recall that $d_G(x)\ge k+1+f(x)$ when $0<f(x)<d$ (Lemma~\ref{degree}).
When $f(x)=d$, our lower bound on degree is weaker.

\begin{lemma}\label{2inV0}
If $x\in V(G)$ and $\C{N(x)\cap V_0}\ge2$, then $d_G(x)>k+1$.
\end{lemma}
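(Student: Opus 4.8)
The plan is to show that a smallest counterexample $(G,f)$ cannot contain a vertex $x$ with $d_G(x)=k+1$ having two distinct neighbors $y,z\in V_0$. By Lemma~\ref{edgeconn} we have $d_G(x)\ge k+1$, so it suffices to exclude the case $d_G(x)=k+1$; fix such an $x$ and distinct $y,z\in N_G(x)\cap V_0$.

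The key move is to trade the vertex $x$ for one new edge. Let $G^*$ be obtained from $G$ by deleting $x$ together with its $k+1$ incident edges and adding a new edge $e_0$ joining $y$ and $z$. Then $\C{E(G^*)}=\C{E(G)}-k<\C{E(G)}$, so by Definition~\ref{order} it remains only to verify that $(G^*,f)$, with $f$ restricted to $V(G)\setminus\{x\}$, is an instance. Since $x\notin V(G^*)$, a set $A\esub V(G^*)$ with $\{y,z\}\nosub A$ induces exactly the same edges in $G^*$ as in $G$, so feasibility and the absence of overfull sets for such $A$ are inherited from $(G,f)$. If $\{y,z\}\esub A$, then $\E A_{G^*}=\E A+1$, and since $e_0$ has both endpoints in $V_0$ we get $\rho^*(A)=\rho(A)-(k+1)$, where $\rho^*$ denotes the potential function of $(G^*,f)$; Lemma~\ref{potkk+} gives $\rho(A)>k(k+1)$ (because $A$ is a nonempty proper subset of $V(G)$ with $\C A\ge2$), so $\rho^*(A)\ge k^2$, while Lemma~\ref{nofull} gives $\E A\le(k+1)(\C A-1)-1$, so $A$ is not overfull in $G^*$. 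Hence $(G^*,f)$ is a smaller instance.

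By minimality, $G^*$ has a $(k,f)$-decomposition $(F_1^*,\dots,F_k^*,D^*)$. Since $f(y)=0$, we have $e_0\notin D^*$; relabel so that $e_0\in F_1^*$. As $F_1^*$ is a forest it contains no other $yz$-edge, so $y$ and $z$ lie in different components of $F_1^*-e_0$. Because $G^*-e_0=G-x$, the tuple $(F_1^*-e_0,F_2^*,\dots,F_k^*,D^*)$ is a $(k,f)$-decomposition of $G-x$. Now reinsert $x$: add one $xy$-edge and one $xz$-edge to $F_1^*-e_0$ — this merges the component of $y$, the new vertex $x$, and the component of $z$ into one tree with no cycle — and place the remaining $k-1$ edges at $x$ one per forest into $F_2^*,\dots,F_k^*$, each time attaching only the new leaf $x$. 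Every part remains a forest, $D^*$ is untouched (with $d_{D^*}(x)=0\le f(x)$), so this is a $(k,f)$-decomposition of $G$, contradicting the choice of $(G,f)$. Therefore $d_G(x)>k+1$.

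The only real obstacle is hitting on this reduction. The natural first attempt, contracting $N_G[x]$ to a vertex of capacity $0$ in the style of Lemma~\ref{contract0}, succeeds only when $\rho(N_G[x])\le(k+1)^2$, which fails exactly when $G[N_G[x]]$ is sparse (for instance when it is just a star), and then ad hoc potential estimates on $\{x,y,z\}$ or on $N_G[x]$ yield nothing. Passing instead to $G^*$ sidesteps this: deleting $x$ and adding the single edge $e_0$ is unconditionally a legitimate shrinking of the instance, and the forest that ends up carrying $e_0$ supplies precisely the separation of $y$ and $z$ that lets two edges at $x$ be absorbed into a single forest. Once the construction is chosen, the feasibility and overfull checks are routine bookkeeping with Lemmas~\ref{potkk+} and~\ref{nofull}.
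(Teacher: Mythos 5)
Your proof is correct and is essentially the paper's own argument: delete $x$ from a minimal counterexample, add an edge $e_0$ joining two neighbors $y,z \in N_G(x)\cap V_0$, verify via Lemma~\ref{potkk+} and Lemma~\ref{nofull} that the result is a smaller instance, and then replace $e_0$ in the forest it lands in by the $y,z$-path through $x$ while distributing the remaining $k-1$ edges at $x$ to the other forests. Your extra remark that $y$ and $z$ lie in different components of $F_1^*-e_0$ makes explicit a step the paper treats as obvious, but the construction and all the checks are the same.
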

\begin{proof}
By Lemma~\ref{edgeconn}, $d_G(x)\ge k+1$; consider equality.  Since $G$ has no
nontrivial full set, adding an edge joining vertices $y,z\in N_G(x)\cap V_0$ to
form $G'$ from $G-x$ creates no overfull
set.  The added edge decreases the potential of any set $A$ containing
$\{y,z\}$ by $k+1$.  By Lemma~\ref{potkk+}, $\rho(A)>k(k+1)$ if
$\{y,z\}\esub A\esub V(G)-\{x\}$, and therefore $\rho'(A)\ge k^2$.

Hence $(G',f)$ is an instance smaller than $(G,f)$, and $G'$ has a
$(k,f)$-decomposition $(F',D')$.  Since $y,z\in V_0$, the added edge $yz$ lies
in $F'$.  Replace it in its forest with the $y,z$-path of length $2$ through
$x$.  The remaining $k-1$ edges at $x$ can be added to the $k-1$ other forests
in $F'$.
\end{proof}

\begin{lemma}\label{fullcap}\label{cap00}
If $x\in V(G)$, then $d_G(x)\ge k+2$ unless $f(x)=0$ and
$\C{N_G(x)\cap V_0}\le 1$.
\end{lemma}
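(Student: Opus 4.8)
The plan is to observe that this lemma is an immediate consequence of the structural lemmas already established, once the single remaining case $d_G(x)=k+1$ is handled. To prove the ``unless'' statement it suffices to assume $d_G(x)<k+2$ and deduce $f(x)=0$ and $\C{N_G(x)\cap V_0}\le1$. By Lemma~\ref{edgeconn} we have $d_G(x)\ge k+1$, so the hypothesis forces $d_G(x)=k+1$, and that is the only configuration we need to analyze.

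The bound $\C{N_G(x)\cap V_0}\le1$ is then immediate: it is precisely the contrapositive of Lemma~\ref{2inV0}, and it holds regardless of the value of $f(x)$. So the substantive point is that $f(x)=0$, and I would prove this by contradiction. Suppose $f(x)>0$ while $d_G(x)=k+1$. Then Lemma~\ref{adjpos} applies and yields $N_G(x)\esub V_0$, so $N_G(x)=N_G(x)\cap V_0$, a set of size at most $1$ by the previous paragraph. Since $d_G(x)=k+1\ge1$, the vertex $x$ has exactly one neighbor $y$, and it is joined to $y$ by all $k+1$ of its incident edges. Thus $\{x,y\}$ is a set of size $2$ with $\E{\{x,y\}}=k+1=(k+1)(\C{\{x,y\}}-1)$, i.e.\ a full set, contradicting Lemma~\ref{nofull}. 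Hence $f(x)=0$, which completes the argument.

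I do not expect any genuine obstacle: the lemma follows directly from Lemmas~\ref{edgeconn}, \ref{adjpos}, \ref{2inV0}, and \ref{nofull}. The only step needing a moment's thought is recognizing that the combination ``$x$ has positive capacity, all edges at $x$ go into $V_0$, and $x$ has at most one neighbor in $V_0$'' collapses to ``$x$ meets a single partner through a multi-edge of multiplicity $k+1$'', which the prohibition of full sets already forbids.
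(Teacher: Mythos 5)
Your proof is correct and uses exactly the same lemmas (\ref{edgeconn}, \ref{adjpos}, \ref{2inV0}, \ref{nofull}) and the same key insight as the paper's argument, merely organized in the contrapositive direction (assume $d_G(x)=k+1$ and deduce both conclusions) rather than splitting on whether $\C{N_G(x)\cap V_0}\ge 2$ first. The substance, including the observation that positive capacity forces a single neighbor through a full multi-edge, is identical.
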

\begin{proof}
If $\C{N_G(x)\cap V_0}\ge2$, then Lemma~\ref{2inV0} applies.  When $f(x)=0$
there is nothing further to show.  If $f(x)>0$, then Lemma~\ref{edgeconn}
yields $d_G(x)\ge k+1$, Lemma~\ref{adjpos} yields $N_G(u)\esub V_0$ when
$d_G(x)=k+1$, and Lemma~\ref{nofull} prevents all $k+1$ incident edges from
going to a single neighbor; hence $\C{N_G(x)\cap V_0}\ge2$.
\end{proof}

\section{Final reductions}\label{sec:mainlem}

Our final reductions restrict the edges leaving $A$ when $A$ has small
potential.  For $A\esub V(G)$, let the {\it boundary} $\partial A$ denote the
set of vertices in $A$ having a neighbor outside $A$.

\begin{lemma}\label{cobound}
If $A$ is a nontrivial subset of $V(G)$ such that $\rho(A)\le k(k+1)+d$,
then no edge joining $\partial A$ to $V(G)-A$ has positive capacity at both
endpoints.
\end{lemma}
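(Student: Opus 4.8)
The plan is to argue by contradiction, in the spirit of the earlier contraction lemmas. Suppose $A$ is nontrivial with $\rho(A)\le k(k+1)+d$ and there is an edge $e=xy$ with $x\in\partial A$, $y\in V(G)-A$, and $f(x)>0$, $f(y)>0$. Since $x\in\partial A$ we have $|A|\ge2$ (it is nontrivial) and $A$ is a proper subset, so $G[A]$ is a proper induced subgraph. The idea is to delete the edge $e$, decrementing the capacities of $x$ and $y$ to make room for $e$ in the $d$-bounded forest, and then recombine a $(k,f^*)$-decomposition of $G-e$ with the single edge $e$.

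Concretely, I would first handle the small-$A$ structure. Because $\rho(A)\le k(k+1)+d$ and $x\in\partial A$ has $f(x)>0$, Lemma~\ref{mainlem} (the companion lemma cited in the introduction, saying that when $\rho(A)\le k(k+1)+d$ and $|A|\ge2$, every vertex of $A$ with a neighbor outside $A$ has capacity $0$) would immediately finish the argument if $A$ is a proper subset. So the real content here must be the case $A=V(G)$, where Lemma~\ref{mainlem} does not apply, or else this lemma is a direct corollary whose ``proof'' is a one-line appeal. Rereading the statement: it says ``$A$ is a nontrivial subset of $V(G)$,'' which includes $A=V(G)$ only if $|V(G)|\ge2$; but more likely ``nontrivial'' here is meant to include $A=V(G)$ as well. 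In any case, when $A=V(G)$ there is no edge leaving $A$, so the statement is vacuous there, and the nontrivial proper-subset case is what matters — meaning this lemma should follow from Lemma~\ref{mainlem} once that is available, or be proved directly by the same deletion technique.

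Assuming I must prove it directly: form $G'=G-e$ with $f'(x)=f(x)-1$, $f'(y)=f(y)-1$, and $f'=f$ elsewhere. Deleting one edge and decrementing capacities at both endpoints changes the potential of a set $B$ as follows: losing the edge $e$ gains back $k+1+d$ (or $k+1$ if both endpoints had capacity $0$, but here they do not) if $e$ is induced by $B$; decrementing $f(x)$ costs $(k+1)$ from $\rho(x)$ if $x\in B$ and $f(x)$ was positive; similarly for $y$. So for any $B$ with $x,y\in B$ containing $e$, the change is $(k+1+d)-2(k+1)=d-(k+1)\le -1$ when $d\le k$, but we are in the regime $d\ge k$, so this could be as large as $-1$ only when $d=k$; for $d>k$ the net change $d-k-1$ can be nonnegative. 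The dangerous sets are therefore those containing $x$ but not $y$ (or vice versa) and not containing $e$: for $B$ with $x\in B$, $y\notin B$, $e\notin E(G[B])$, the potential drops by exactly $k+1$, and we need $\rho'(B)=\rho(B)-(k+1)\ge k^2$, i.e. $\rho(B)\ge k^2+k+1 > k(k+1)$ — which is exactly what Lemma~\ref{potkk+} gives, unless $B$ is a single vertex of capacity $0$; but $x\in B$ with $f(x)>0$ rules that out. Sets containing $y$ but not $x$: same, using $f(y)>0$. Sets containing neither $x$ nor $y$, or containing both together with $e$: handled above (the change is $\ge -1$, need to check $\rho(B)\ge k^2$ still, which can fail only if $\rho(B)=k^2$ and the change is $-1$; here $\rho$ of such a $B$ is unchanged since it contains neither $x$ nor $y$, or changed by $d-k-1\ge -1$... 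I would need to be careful when $d=k$ and $\rho(B)=k^2$). Overfull sets: removing an edge cannot create an overfull set, so $G'$ has none.

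The main obstacle I anticipate is the borderline bookkeeping when $d=k$ (which is allowed, since the hypothesis of Theorem~\ref{main} is $d\ge k$): for sets $B$ containing both $x,y$ and the edge $e$, the potential drops by exactly one when $d=k$, so I must show that no such $B$ has $\rho(B)=k^2$ exactly — equivalently, that the minimum-potential sets of that form have slack. I expect this follows because such a $B$ has $|B|\ge2$ and is not reduced to the extremal configurations of Lemma~\ref{nofull} or Lemma~\ref{potkk+}; alternatively the hypothesis $\rho(A)\le k(k+1)+d$ is only used through $A$ and not $B$, so perhaps the intended proof simply invokes Lemma~\ref{mainlem} for proper $A$ and notes $A=V(G)$ is vacuous, sidestepping all of this. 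After establishing $(G',f')$ is a smaller feasible instance with no overfull set, it has a $(k,f')$-decomposition $(F',D')$; then $d_{D'}(x)\le f(x)-1$ and $d_{D'}(y)\le f(y)-1$, so adding $e$ to $D'$ keeps $D'$ a forest (if $e$ closed a cycle in $D'$, that cycle already existed after contracting, contradiction — more precisely $x,y$ lie in different components of $D'$ or we route around, but since $G'=G-e$ the forest $D'\cup\{e\}$ is a forest unless $x,y$ were already connected in $D'$; if they were, swap $e$ into whichever forest of $F'$ avoids creating a cycle — at most $k$ forests, and the union $D'\cup F'\cup\{e\}$ has a forest-decomposition by a standard augmenting argument) and respects the capacity bound $f$. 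Hence $G$ is $(k,f)$-decomposable, contradicting that $(G,f)$ is a smallest counterexample.
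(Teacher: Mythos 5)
Your attempt does not use the hypothesis $\rho(A)\le k(k+1)+d$ anywhere, which is a serious warning sign: with the hypothesis dropped, the conclusion would read ``no edge with positive capacity at both endpoints leaves any nontrivial set,'' which is false in general. The paper's proof uses $A$ essentially. It does not merely delete the edge $xy$ and decrement $f(x),f(y)$; it deletes $xy$ and \emph{contracts $A$ to a new vertex $z$ with $f'(z)=0$}, decrementing only $f(y)$. The hypothesis $\rho(A)\le k(k+1)+d$ then enters exactly in the feasibility computation for sets containing $z$, via the term $-\rho(A)\ge -[k(k+1)+d]$; without contracting, the hypothesis has no role and your argument is proving (or failing to prove) a different and stronger statement.

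This produces two concrete gaps. First, feasibility: your deletion--decrement operation changes the potential of a set $B$ containing $x,y$, and the edge, by $d-(k+1)$, which is negative for $d\le k$. For proper $B$ with $\C B\ge 2$ you can recover using Lemma~\ref{potkk+}, but for $B=V(G)$ the only guarantee is $\rho(V(G))\ge k^2$, and equality can occur (the paper's final remark explicitly allows $\rho(V(G))=k^2$). So at $d=k$, which is permitted by Theorem~\ref{main}, the new instance may be infeasible and the induction does not apply. Second, re-inserting $e$: after you have a $(k,f')$-decomposition of $G-e$, there is no reason $x$ and $y$ lie in different components of $D'$; your appeal to ``a standard augmenting argument'' glosses over the fact that the matroid-union exchange can move edges incident to $x$ or $y$ into $D'$, and it does not respect the capacity constraints. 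The paper's contraction sidesteps this entirely: with $f'(z)=0$ no $D'$-edge is incident to $z$, so when Lemma~\ref{contract} glues $D'$ to a $(k,f)$-decomposition of $G[A]$ (chosen via Lemma~\ref{onedown} so that $d_{D^*}(x)<f(x)$), the endpoints $x$ and $y$ are automatically in different components of $D$ and the edge $xy$ can be added to $D$ without any exchange argument. Finally, your opening idea of invoking Lemma~\ref{mainlem} would be circular: Lemma~\ref{mainlem} is proved in the paper using Lemma~\ref{cobound}, not the other way around.
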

\begin{proof}
Suppose otherwise, and choose $A$ among the counterexamples with smallest
potential.  Let $xy$ be an edge with $x\in A$, $y\notin A$, and $f(x),f(y)>0$.
Form $G'$ from $G$ by deleting this edge and then contracting $A$ to a single
vertex $z$.  Define $f'$ on $G'$ by $f'(z)=0$ and $f'(y)=f(y)-1$ and
$f'(v)=f(v)$ for $v\in V(G)-A-\{y\}$.

We first prove that $f'$ is feasible.  Consider $A'\esub V(G')$.  If
$y,z\notin A'$, then $\rho'(A')=\rho(A')$.  If $A'$ contains $y$ and not $z$,
then $\rho'(A')=\rho(A')-(k+1)\ge k^2$, by Lemma~\ref{potkk+}.  When $z\in A'$,
let $A^*=(A'-z)\cup A$.  If $y,z\in A'$, then since $f'(y)=f(y)-1$ and one copy
of $xy$ in $G$ is missing from $G'$, we obtain $\rho'(A')\ge\rho(A^*)\ge k^2$
from
\begin{align*}
\rho'(A')-\rho(A^*)&\ge -\rho(A)+\rho'(z)-(k+1)-\rho(xy)\\
&\ge -[k(k+1)+d]+k(k+1)-(k+1)+(k+1+d)=0.
\end{align*}
If $A'$ contains $z$ and not $y$, then $\rho(A^*)\le\rho(A)+\rho'(A')-\rho(z)$.
If $\rho'(A')<k^2$, then $A'\ne\{z\}$ and $\rho(A^*)< \rho(A)-k$.  Since the
edge $xy$ joining $\partial A^*$ to $V(G)-A$ has positive capacity at both
endpoints, and $A^*$ is a nontrivial set, this contradicts our choice of $A$ as
a counterexample with smallest potential.  We conclude $\rho'(A')\ge k^2$.

Hence $f'$ is feasible.  Since $G$ has no overfull set, an overfull set must
contain $z$.  However, since $f'$ is feasible on $G'$ and $f'(z)=0$,
Lemma~\ref{nocap0} implies that no overfull set in $G'$ contains $z$.
Hence $G'$ contains no overfull set.

Since $(G',f')$ is smaller than $(G,f)$, we now have a $(k,f')$-decomposition
of $G'$.  By Lemma~\ref{onedown}, $G[A]$ has a $(k,f)$-decomposition $(F^*,D^*)$
such that $d_{D^*}(x)<f(x)$.  By Lemma~\ref{contract}, the two decompositions
combine to form a $(k,f)$-decomposition $(F,D)$ of $G-xy$ that becomes a
$(k,f)$-decomposition of $G$ by adding a copy of the edge $xy$ to $D$.
\end{proof}

\begin{lemma}\label{mainlem}
If $A$ is a nontrivial subset of $V(G)$ such that $\rho(A)\le k(k+1)+d$, then
$\partial A\esub V_0$.
\end{lemma}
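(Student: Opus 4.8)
The plan is to argue by contradiction, restricting attention to a minimal offending set and then running the same delete‑and‑contract reduction used for Lemma~\ref{cobound}.

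Suppose the conclusion fails, so some $x\in\partial A$ has $f(x)>0$. Among all nontrivial $A\subseteq V(G)$ with $\rho(A)\le k(k+1)+d$ that have a boundary vertex of positive capacity, I would fix one with $\rho(A)$ smallest, together with a witness $x\in\partial A$, $f(x)>0$. Since $x\in\partial A$ it has a neighbor $y\notin A$, and because $\rho(A)\le k(k+1)+d$ and $f(x)>0$, Lemma~\ref{cobound} applied to the edge $xy$ forces $f(y)=0$; moreover every neighbor of $x$ outside $A$ lies in $V_0$.

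Next I form $G'$ from $G$ by deleting one copy of $xy$ and contracting $A$ to a single vertex $z$, with $f'(z)=0$ and $f'(v)=f(v)$ otherwise. First I check feasibility of $f'$. The only subsets $A'\subseteq V(G')$ that need attention are those containing $z$; writing $A^*=(A'\setminus\{z\})\cup A$ and comparing $\rho'(A')$ with $\rho(A^*)$, the crossing‑edge potentials can only rise under the contraction, and when $y\in A'$ the extra $\rho(z)=k(k+1)$ together with the absence of the deleted copy of $xy$ gives $\rho'(A')\ge\rho(A^*)+ (k+1)\ge k^2$ (using $\rho(A)\le k(k+1)+d$ and feasibility of $f$, valid even when $A^*=V(G)$). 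When $y\notin A'$, the set $A^*$ is again nontrivial, $y\notin A^*$, and $x\in\partial A^*$ still has $f(x)>0$; if $\rho'(A')<k^2$ one computes $\rho(A^*)<\rho(A)$, so $A^*$ would be an offending set of smaller potential than $A$, contradicting minimality. Hence $f'$ is feasible. Given feasibility, $G'$ has no overfull set: an overfull set avoiding $z$ would be overfull in $G$, while an overfull (hence full) set containing the capacity‑$0$ vertex $z$ is forbidden by Lemma~\ref{nocap0}. Thus $(G',f')$ is a strictly smaller instance than $(G,f)$ in the sense of Definition~\ref{order}.

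Consequently $G'$ has a $(k,f')$-decomposition $(F',D')$, and since $f'(z)=0$ the vertex $z$ is isolated in $D'$, so no edge leaving $A$ is used in $D'$. Because $G[A]$ is a proper induced subgraph containing $x$ with $f(x)>0$, Lemma~\ref{onedown} gives a $(k,f)$-decomposition $(F^*,D^*)$ of $G[A]$ with $d_{D^*}(x)<f(x)$, and Lemma~\ref{contract} assembles these into a $(k,f)$-decomposition $(F,D)$ of $G-xy$ in which $d_D(x)<f(x)$ and no edge leaving $A$ lies in $D$. It then remains to reinsert $xy$. Since $f(y)=0$, the edge $xy$ cannot be placed in $D$, so it must go into one of the $k$ forests, which works precisely when $x$ and $y$ lie in different components of some $F_i$. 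The main obstacle is exactly this reinsertion step. I would prove it by noting that in $G'$ all edges at $z$ lie in the forests $F'_1,\dots,F'_k$, choosing (after seeing $(F',D')$) a forest $F_i$ and a decomposition $(F^*,D^*)$ of $G[A]$ — reserving one suitable edge of $G[A]$ for $D^*$ and invoking Lemma~\ref{onedown}/Lemma~\ref{induced} — so that $x$'s component of $F^*_i$ is not joined to $y$ by the un‑contracted edges of $F'_i$ at $z$; if no such choice existed, the matroid‑union obstruction to adding $xy$ to any forest would produce a vertex set $W\ni x,y$ whose $k$‑forest part is saturated, and then $\E{W}$ would violate feasibility of $f$ at $W$ once one uses the no‑overfull hypothesis and the absence of full sets (Lemma~\ref{nofull}). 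Everything preceding this reinsertion is a routine variant of the proof of Lemma~\ref{cobound}; the delicate point is keeping $y$ separated from $x$'s forest‑component after lifting the contraction.
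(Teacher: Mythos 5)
Your feasibility analysis and the extremal choice of $A$ (minimum potential rather than the paper's maximum size) are fine, but your reduction introduces an unnecessary complication that creates the very gap you flag at the end and then do not close. Deleting $xy$ is the wrong move here. It works in Lemma~\ref{cobound} because there $f(y)>0$: one lowers $f(y)$, invokes Lemma~\ref{onedown} at $x$, and returns $xy$ to $D$. Here $f(y)=0$, so $xy$ cannot go into $D$, and nothing in your construction guarantees that after lifting the contraction $x$ and $y$ lie in different components of some $F_i$. Your closing ``matroid-union obstruction'' sketch is not a proof: the decompositions of $G'$ and of $G[A]$ are produced by the minimal-counterexample recursion, not chosen by you, so you cannot simply ``reserve a suitable edge'' to force separation, and you never actually extract a contradiction from the supposed saturated set $W$. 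That reinsertion step is genuinely missing, and without it the argument does not establish the lemma.

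The paper sidesteps all of this by \emph{not} deleting $xy$. It forms the $A$-contraction $(G',f')$ of Definition~\ref{Acon} with $f'(z)=0$, so $xy$ survives as the edge $zy$ of $G'$. In the $(k,f')$-decomposition of $G'$, $z$ has capacity $0$, hence $zy$ automatically lands in one of the $k$ forests, and Lemma~\ref{contract0} uncontracts directly to a $(k,f)$-decomposition of $G$ with no reinsertion step at all. The $+d$ of slack needed to keep $f'$ feasible despite $f'(z)=0$ comes precisely from the fact that $zy$ has both ends of capacity $0$ in $G'$ and hence potential $-(k+1)$, whereas $xy$ had potential $-(k+1+d)$ in $G$; this gain of $d$ absorbs the hypothesis $\rho(A)\le k(k+1)+d$ in the case $y\in A'$, and the maximality (or, in your version, minimality-of-potential) choice of $A$ handles the case $y\notin A'$. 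Dropping the deletion also makes Lemma~\ref{onedown} superfluous here and collapses your combine-then-reinsert plan into a single application of Lemma~\ref{contract0}.
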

\begin{proof}
If not, then let $A$ be a largest nontrivial subset with $\rho(A)\le k(k+1)+d$
and $\partial A\nosub V_0$.  Choose $x\in\partial A$ with $f(x)>0$, and
choose $y\in N_G(x)-A$.  By Lemma~\ref{cobound}, $f(y)=0$.

Let $(G',f')$ be the $A$-contraction of $(G,f)$.  If $f'$ is feasible, then $G$
is $(k,f)$-decomposable, by Lemma~\ref{contract0}.
Consider $A'\esub V(G')$ with $z\in A'$, and let $A^*=(A'-\{z\})\cup A$.

If $\rho'(A')<k^2$, then $\rho(A^*)\le \rho'(A')-\rho'(z)+\rho(A)<\rho(A)$.
This contradicts the choice of $A$ if $A^*$ is nontrivial and
$\partial A^*\nosub V_0$.  Hence we may assume $A^*=V(G)$ or
$\partial A^*\esub V_0$.  In either case, since $x\in A$ and $f(x)>0$, we
must have $y\in A'$.  Now $\rho(xy)=-(k+1+d)$, but $\rho'(zy)=-(k+1)$, so
$\rho'(zy)-\rho(xy)=d$.  Hence
$$\rho'(A')\ge \rho(A^*)-\rho(A)+\rho'(z)+d\ge\rho(A^*),$$
by the hypothesis $\rho(A)\le k(k+1)+d$.  We conclude that $f'$ is feasible, as
desired.
\end{proof}

In order to clarify which part of our proof requires $k\le 2$ and suggest
further directions, we now give a discharging argument to show what remains to
be excluded when $d$ is large.

\begin{theorem}\label{disch}
Let $(G,f)$ be a smallest counterexample, and put $h(v)=\Ccut{[\{v\},V_0]}$.
If $d\ge k$ or $k\le2$, then some $v\in V(G)$ satisfies\\
$\phantom{0}$\quad(1) $f(v)=0$ with $h(v)>2(d_G(v)-k-1)\FR k{k-1}$, or\\
$\phantom{0}$\quad(2) $f(v)=d$ with
$h(v)<\FR{(2k+2-d_G(v))(k+1+d)-2(k+1)}{d+1-k}$.\\
In particular, (1) requires $d_G(v)<2k$, and (2) requires $d_G(v)<2k+2$.
\end{theorem}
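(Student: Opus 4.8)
The plan is a discharging argument that derives a contradiction from the assumption that no vertex satisfies (1) or (2). Give each vertex $v$ the charge $\rho(v)$ and each edge $e$ the charge $\rho(e)$, so the total charge equals $\rho(V(G))$, which is at least $k^2$ by feasibility of $f$. I will move charge from vertices to incident edges so that afterward every edge has charge exactly $0$ while every vertex has charge at most $0$ unless it is of type (1) or (2). Since total charge is conserved, the absence of any type-(1) or type-(2) vertex would force the total to be at most $0$, contradicting $\rho(V(G))\ge k^2>0$; hence some vertex is of type (1) or (2).

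The discharging rule depends only on the capacities of an edge's two endpoints. Across an edge with both endpoints of positive capacity, each endpoint sends $\tfrac{k+1+d}{2}$ to the edge; across an edge with one endpoint $u\in V_0$ and one endpoint $v$ of positive capacity, $v$ sends $d+1$ and $u$ sends $k$; across an edge between two vertices of $V_0$, each endpoint sends $\tfrac{k+1}{2}$. In every case the charge sent across an edge totals $k+1$ when both its endpoints lie in $V_0$ and $k+1+d$ otherwise, which is exactly $-\rho(e)$, so every edge finishes with charge $0$; this step uses only the definition of $\rho$.

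Now examine the final charge of a vertex $v$, which is $\rho(v)$ minus the total it sends out. If $f(v)=0$, a short computation gives final charge $k(k+1)-k\,d_G(v)+\tfrac{k-1}{2}h(v)$, which is positive exactly when $h(v)>2(d_G(v)-k-1)\tfrac{k}{k-1}$, i.e. exactly when $v$ is of type (1). If $f(v)>0$, the final charge is $(k+1)(k+f(v))-\tfrac{k+1+d}{2}d_G(v)-\tfrac{d+1-k}{2}h(v)$. For $f(v)=d$, the identity $(2k+2-d_G(v))(k+1+d)-2(k+1)=2(k+1)(k+d)-d_G(v)(k+1+d)$ shows this is positive exactly when $v$ is of type (2) (the denominator $d+1-k$ is positive when $d\ge k$; in the borderline case $(k,d)=(2,1)$ the $h(v)$-term drops out and the equivalence persists upon reading the bound as $\pm\infty$ according to its numerator's sign). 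For $0<f(v)<d$ we have $d\ge f(v)+1\ge2$, hence $d\ge k$ (either by hypothesis or because $k\le2\le d$), so $-\tfrac{d+1-k}{2}h(v)\le0$; combining this with the bound $d_G(v)\ge k+1+f(v)$ of Lemma~\ref{degree} reduces nonpositivity of the final charge to $(k+1+d)(k+1+f(v))\ge 2(k+1)(k+f(v))$, which a routine calculation confirms whenever $d\ge k$ (treating the left side minus the right side as a linear function of $f(v)$ on $[1,d-1]$). Thus a vertex with positive final charge has capacity $0$ or $d$, which completes the discharging conclusion.

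The ``in particular'' assertions follow from $h(v)\le d_G(v)$: if $v$ is of type (1) then $d_G(v)\ge h(v)>2(d_G(v)-k-1)\tfrac{k}{k-1}$ rearranges to $d_G(v)<2k$; if $v$ is of type (2) then the stated upper bound on $h(v)$ exceeds $0$, so its numerator $(2k+2-d_G(v))(k+1+d)-2(k+1)$ is positive, giving $d_G(v)<2k+2$. The only delicate point is the choice of discharging rule: sending $\tfrac{k+1+d}{2}$ uniformly across every edge leaves $V_0$-vertices with positive charge, and the right fix --- having the $V_0$ endpoint of a mixed edge contribute exactly $k$, independent of $d$ --- is precisely what aligns the failure conditions of $V_0$-vertices and of capacity-$d$ vertices with (1) and (2) while keeping intermediate-capacity vertices nonpositive; this last check is where the hypothesis ``$d\ge k$ or $k\le2$'' is consumed.
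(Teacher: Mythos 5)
Your proposal is correct and follows essentially the same approach as the paper's: the identical three discharging rules (each edge absorbs $-\rho(e)$, apportioned $k$/$d+1$ on mixed edges and split evenly otherwise), the same per-vertex accounting, the same use of Lemma~\ref{degree} for the intermediate-capacity vertices, and the same observation that the hypothesis ``$d\ge k$ or $k\le2$'' is exactly what makes $d+1\ge k$ and hence keeps intermediate-capacity vertices nonpositive. The one small difference is that you flag the degenerate denominator when $d+1-k=0$ and specify how to read the type-(2) inequality there; the paper leaves this implicit, but your reading (clear denominators) is the right one.
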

\begin{proof}
A smallest counterexample has all the properties derived in the prior lemmas,
which impose no restriction on $(k,d)$.  We use discharging to show that for
such an instance, the total potential is nonpositive when vertices as specified
above are also forbidden.

Give each vertex and edge initial charge equal to its potential.  Hence the
total charge is at least $k^2$.  The edges now take charge from their
endpoints by the following rules:

\medskip
Rule 1: Every edge $xy$ with $f(x)=0$ and $f(y)>0$ takes $k$ from $x$
and $d+1$ from $y$.

Rule 2: Every edge joining vertices not in $V_0$ takes $(k+1+d)/2$ from each
endpoint.

Rule 3: Every edge joining vertices in $V_0$ takes $(k+1)/2$ from each endpoint.

\medskip
By construction, edges end with charge $0$.  It suffices to show that all
vertices also end with nonpositive charge.  Consider $v\in V(G)$.
By Lemma~\ref{edgeconn}, $d_G(v)\ge k+1$.

If $f(v)=0$, then $v$ loses charge $h(v)\FR{k+1}2+(d_G(v)-h(v))k$.  This
is at least $k(k+1)$ if and only if $h(v)\le 2(d_G(v)-k-1)\FR k{k-1}$.  In
the special case $d_G(v)\ge 2k$, violating this inequality requires
$h(v)>2(\FR12 d_G(v)-1)\FR{d_G(v)/2}{d_G(v)/2-1}=d_G(v)$, but always
$h(v)\le d_G(v)$.

Each vertex with positive charge loses at least $\FR{k+1+d}2$ to each incident
edge, by Rule 1 or Rule 2 (since $d+1\ge k$).  If $1\le f(v)< d$, then
$d_G(v)\ge k+1+f(v)$, by Lemma~\ref{degree}.  Hence $v$ loses at least 
$\FR{k+1+d}2(k+1+f(v))$.  If $d\ge k+1$, then $\FR{k+1+d}2\ge k+1$, and the
lower bound $(k+1)(k+1+f(v))$ on the lost charge exceeds the initial charge
$(k+1)(k+f(v))$.  The remaining case is $k=d>f(v)$, where
\begin{center}
$(k+1)(k+f(v))-\FR{k+1+d}2(k+1+f(v))=\FR{1}2(k+f(v))-k-\FR12 <0$.
\end{center}

Finally, consider $f(v)=d$; Lemma~\ref{fullcap} gives $d_G(v)>k+1$.
The charge lost by $v$ is $h(v)(d+1)+(d_G(v)-h(v))\FR{k+1+d}2$.
For $v$ to reach nonpositive charge, this must be at least $(k+1)(k+d)$.
The inequality simplifies to $h(v)\ge\FR{(2k+2-d_G(v))(k+1+d)-2(k+1)}{d+1-k}$.
This always holds when $d_G(v)\ge 2k+2$, and when $d_G(v)=2k+1$ we only need
$h(v)\ge 1$ or $d\le k+1$.
\end{proof}

\section{The Case $k\le2$}\label{sec:k2}

Note that Theorem~\ref{disch} applies for all $(k,d)$ with $k\le2$.  Our task
is to prohibit the exceptions in Theorem~\ref{disch} when $(G,f)$ is a smallest
counterexample.

First suppose $f(v)=0$.  If $d_G(v)\ge2k$, then there is no problem, which
suffices when $k=1$.  For $k=2$ we have $\delta(G)\ge3$, so we may assume
$d_G(v)=3$.  To avoid the exception, we must show that all neighbors of $v$
have positive capacity.

Now suppose $f(v)=d$.  By Lemma~\ref{fullcap}, $d_G(v)>k+1$.
By Theorem~\ref{disch}, we may assume $d_G(v)\le 2k+1$.  For $k=1$, we thus
have $d_G(v)=3$ and only need one neighbor in $V_0$.  When $k=2$, we may assume
$d_G(v)\in\{4,5\}$.  When $d_G(v)=5$, we only need one neighbor in $V_0$.
When $d_G(v)=4$, we need $h(v)\ge3$ if $d\ge3$, but $h(v)=4$ if $d=2$.

\begin{lemma}\label{k+2}
For $k\le 2$, let $(G,f)$ be a smallest counterexample.
If $d_G(x)=k+2$ with $f(x)\ge2$, then $N_G(x)\esub V_0$.
\end{lemma}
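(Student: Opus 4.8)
The plan is a proof by contradiction. Suppose $x$ has a neighbour $y$ with $f(y)>0$; I will construct a $(k,f)$-decomposition of $G$. First I pass to a smaller instance by deleting one copy of the edge $xy$ and lowering capacities: let $G'=G-xy$ and set $f'(x)=f(x)-1$, $f'(y)=f(y)-1$, and $f'=f$ elsewhere. Then $G'$ has no overfull set since $G'\subseteq G$, and $f'$ is feasible by a short case analysis of $\rho'(A)\ge k^2$ according to which of $x,y$ lies in $A$. Each edge of $G$ satisfies $\rho'\ge\rho$; lowering one capacity costs $k+1$, and deleting the copy of $xy$ (potential $-(k+1+d)$) gains $k+1+d$, so $\rho'(A)\ge\rho(A)-(k+1)$ when $A$ meets exactly one of $x,y$, and $\rho'(A)\ge\rho(A)+d-k-1$ when $A\supseteq\{x,y\}$. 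Combined with Lemma~\ref{potkk+} (which gives $\rho(A)\ge k^2+k+1$ whenever $|A|\ge2$) and the hypotheses $f(x)\ge2$, $f(y)\ge1$ to handle singletons, this yields $\rho'(A)\ge k^2$ throughout. Hence $(G',f')$ is a smaller instance and has a $(k,f')$-decomposition $(F',D')$.

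Next I reinsert the deleted copy of $xy$. Since $d_{G'}(x)=k+1$ and there are $k+1$ parts $F'_1,\dots,F'_k,D'$, either some part has no edge at $x$, or $x$ has degree exactly $1$ in each part. In the former case, add $xy$ to such a part (when it is $D'$, note $d_{D'}(x)\le f(x)-1$ and $d_{D'}(y)\le f(y)-1$, so both capacities stay within $f$), producing a $(k,f)$-decomposition of $G$ and a contradiction. In the latter case, adding $xy$ to a part $B$ creates a cycle exactly when $x$ and $y$ lie in one component of $B$ (the capacity room in $D'$ again follows from $f(x)\ge2$), so unless we are already done, $x$ and $y$ share a component in all $k+1$ parts. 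This produces $k+1$ pairwise edge-disjoint $x,y$-paths $P_0,\dots,P_k$ in $G-xy$; together with the copy of $xy$ these use all $k+2$ edges at $x$, so $x$ has no neighbour outside $A:=V(P_0)\cup\dots\cup V(P_k)$ and $d_{G[A]}(y)\ge k+2$.

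Ruling out this last configuration is the step I expect to be the main obstacle. I would attack it by re-routing: for the part $B_i$ carrying $P_i$, delete the first edge $xa_i$ of $P_i$ from $B_i$ and instead put the copy of $xy$ into $B_i$ (legal, since $x$ is then isolated in $B_i$, with the needed slack when $B_i=D'$), and try to re-place $xa_i$ in a different part; such a re-placement is blocked only when $a_i$ lies in $x$'s component in every other part. If all $k+1$ re-routings are blocked, the vertices $a_0,\dots,a_k$ together with $y$ are forced to share components across the at most three parts available when $k\le2$, a severe constraint. I then expect to finish either by exhibiting a legal re-placement after all—using Lemma~\ref{nofull} to forbid a multi-edge of multiplicity $k+1$ at $x$ (so the $P_i$ start toward at least two distinct neighbours of $x$) and the structure of $V_0$ from Lemma~\ref{cap0}—or by showing that the forced concentration of edges makes $\rho(A')\le k^2-1$ for a suitable $A'\subseteq A$, contradicting feasibility. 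Everything up to this point (the smaller instance, the direct reinsertion, and the reduction to the paths $P_i$) is routine; the augmentation case analysis, especially for $k=2$ with its three parts, is the delicate part.
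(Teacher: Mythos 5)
Your proposal takes a genuinely different route from the paper. The paper deletes the vertex $x$ entirely, adds an auxiliary edge $uu'$ between former neighbours of $x$, lowers only $f(y)$, and finishes by subdividing $uu'$ into a length-$2$ path through $x$ and distributing the remaining edges at $x$ into the other forests; the $k\le2$ restriction enters in the feasibility computation for $(G',f')$ (to guarantee $r\ge k+1$ edges from $x$ into a low-potential set $A'$), using Lemma~\ref{mainlem}. You instead delete a single copy of $xy$, lower both $f(x)$ and $f(y)$, obtain a decomposition, and try to reinsert $xy$.

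Two gaps in your argument. First, the feasibility check is incomplete at $A=V(G)$. Lemma~\ref{potkk+} gives $\rho(A)>k(k+1)$ only for proper nonempty $A\subsetneq V(G)$; for $A=V(G)$ you only have $\rho(V(G))\ge k^2$. Your net change when $\{x,y\}\esub A$ is $d-k-1$, which is $-1$ when $d=k$. So for $d=k$ (e.g., $k=d=2$, $f(x)=2$, which the lemma must cover) the bound $\rho'(V(G))\ge\rho(V(G))-1\ge k^2-1$ does not establish feasibility; you would need an additional argument that $\rho(V(G))>k^2$ in this situation, or a different construction. (The paper avoids this because it removes $x$ and uses $r=k+2$ for $A'\cup\{x\}=V(G)$, recovering the full $k^2$.)

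Second, and decisively, the reinsertion argument is not finished. After reducing to the case where $x$ and $y$ share a component in all $k+1$ parts and extracting the edge-disjoint paths $P_0,\ldots,P_k$, you propose a rerouting scheme but explicitly do not carry it through. This is exactly where the substance of the lemma lies; the case analysis you acknowledge as ``the delicate part'' is the whole proof, and as written the contradiction is never reached. The paper's construction was chosen precisely to avoid this: by reintroducing $x$ via a single subdivision of $uu'$ rather than by reinserting $xy$, no rerouting is needed, at the cost of the more careful potential bookkeeping noted above.
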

\begin{proof}
If the conclusion fails, then $x$ has a neighbor $y$ with $f(y)>0$.  Since $G$
has no overfull set, $x$ has at least two neighbors; choose $u\in N_G(x)-\{y\}$.
Let $u'$ be a third vertex of $N_G(x)$, if possible; otherwise, let $u'=y$.
Form $G'$ from $G-x$ by adding one copy of the edge $uu'$.  Define $f'$ by
$f'(y)=f(y)-1$ and $f'(v)=f(v)$ for $v\in V(G)-\{x,y\}$.  By
Lemma~\ref{nofull}, $G$ has no full set, and hence $G'$ has no overfull set.

To show that $f'$ is feasible, we need $\rho'(A')\ge k^2$ for $A'\esub V(G')$.
By Lemma~\ref{potkk+}, $\rho(A')>k(k+1)$ if $\C{A'}\ge2$.  Even with
$\rho'(y)=\rho(y)-(k+1)$, we thus have $\rho'(A')\ge k^2$ unless $u,u'\in A'$.
If $y\notin A'$, then $\rho'(A')=\rho(A')+\rho'(uu')$.
Again we have $\rho'(A')\ge k^2$ unless $\rho'(uu')=-(k+1+d)$ and 
$\rho(A')\le k(k+1)+d$.  Since $u,u'\in N_G(x)$, we have $u,u'\in\partial A'$.
Lemma~\ref{mainlem} then requires $u,u' \in V_0$, so $\rho'(uu')=-(k+1)$
and again $\rho'(A)\ge k^2$.

Hence $\rho'(A')<k^2$ requires $y,u,u'\in A'$.  Let $r$ be the number of edges
joining $x$ to $A'$; each has potential $-(k+1+d)$, since $f(x)>0$.
Since $\rho(y)-\rho'(y)=k+1$, we have
\begin{align*}
\rho'(A')
&\ge \rho(A'\cup\{x\})-\rho(x)-(k+1)+\rho'(uu')+r(k+1+d)\\
&\ge\rho(A'\cup\{x\})-(k+1)(k+d+1)+(r-1)(k+1+d).
\end{align*}

It thus suffices to show $r\ge k+1$ when $\rho(A'\cup\{x\})>k(k+1)+d$ and
$r=k+2$ otherwise.
If $\rho(A'\cup\{x\})\le k(k+1)+d$ and $A'\ne V(G)$, then 
$\partial(A'\cup\{x\})\esub V_0$, by Lemma~\ref{mainlem}.
Since $f(x)>0$, this requires $N_G(x)\esub A'$, which also holds if
$A'\cup\{x\}=V(G)$.  Hence $r=k+2$.

When $\rho(A'\cup\{x\})>k(k+1)+d$, we only need $r\ge k+1$.  Here we use
$k\le2$.  If $y\ne u'$, then $x$ has three neighbors in $A'$.  If $y=u'$,
then $N_G(x)=\{u,y\}\esub A'$, and $r=k+2$.

Hence $(G',f')$ is a smaller instance, and $G'$ has a $(k,f')$-decomposition
$(F',D')$.  Since $f(x)\ge2$, the added edge $uu'$ can be replaced in its
forest by a $u,u'$-path $P$ of length $2$ through $x$.  Adding the remaining
$k$ edges at $x$ to the other forests will yield a $(k,f)$-decomposition of
$G$.  When $P$ is added to a forest other than $D'$, we must add one of the
remaining edges to $D'$.  This edge can be $xy$ if $y\ne u'$, since
$f'(y)<f(y)$ and $f(x)>0$.  If $y=u'$, then $N_G(x)=\{y,u\}$; since there is no
full set of size $2$ (Lemma~\ref{nofull}), $G$ has two copies of the edge $xy$.
Hence also in this case a copy of $xy$ is not absorbed by $P$ and can be added
to $D'$.
\end{proof}

\begin{corollary}\label{cork1}
The NDT Conjecture is true when $k=1$.
\end{corollary}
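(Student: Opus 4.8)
The plan is to deduce the corollary from Theorem~\ref{main} specialized to $k=1$, so the first task is to observe that the lemmas already in hand suffice to prove Theorem~\ref{main} when $k=1$. Suppose $(G,f)$ is a smallest counterexample to the $(k,f)$-decomposability statement for some $d\ge1$ and $k=1$. Then $(G,f)$ enjoys all the properties established for smallest counterexamples (these impose no restriction on $(k,d)$), and since $k=1\le2$, Theorem~\ref{disch} supplies a vertex $v$ of type (1) or type (2). Type (1) would require $d_G(v)<2k=2$, which is impossible since $d_G(v)\ge k+1=2$ by Lemma~\ref{edgeconn}. So $v$ has type (2), i.e.\ $f(v)=d$; Lemma~\ref{fullcap} gives $d_G(v)\ge k+2=3$, while Theorem~\ref{disch} gives $d_G(v)<2k+2=4$, so $d_G(v)=3$.

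Now I handle the type-(2) vertex by cases on $d$. The bound on $h(v)$ in Theorem~\ref{disch}(2) becomes, with $k=1$ and $d_G(v)=3$,
$h(v)<\FR{(2k+2-d_G(v))(k+1+d)-2(k+1)}{d+1-k}=\FR{d-2}{d}$.
For $d\le2$ this is $\le0$, which no value of $h(v)$ can satisfy, so there is no type-(2) vertex when $d\le2$. For $d\ge3$ the right-hand side lies strictly between $0$ and $1$, so $h(v)=0$, meaning $N_G(v)\cap V_0=\nul$. But $f(v)=d\ge2$ and $d_G(v)=3=k+2$, so Lemma~\ref{k+2} applies and yields $N_G(v)\esub V_0$, whence $h(v)=3$, a contradiction. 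Thus no smallest counterexample exists, and Theorem~\ref{main} holds for $k=1$ and every $d\ge1$. (If one prefers, the case $d=1$ can instead be quoted from~\cite{Yang}.)

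It remains to pass from Theorem~\ref{main} to the NDT Conjecture. Let $G$ satisfy $\Arb(G)\le1+\FR d{d+2}$. As recorded just before Conjecture~\ref{feasfullconj}, this forces $G$ to be $(1,d)$-sparse and to have no overfull set; and in the uniform case $f\equiv d$, feasibility of the potential function is exactly the condition of being $(1,d)$-sparse. Hence $f\equiv d$ is feasible on $G$ and $G$ has no overfull set, so Theorem~\ref{main} with $k=1$ produces a $(1,f)$-decomposition of $G$, i.e.\ a decomposition of $G$ into one forest and one $d$-bounded forest; that is the conclusion of the NDT Conjecture for $k=1$. The only delicate point in this argument is the bookkeeping in the middle paragraph: one must verify that the list of exceptions in Theorem~\ref{disch} is genuinely emptied, which reduces to the numerology for $d\le2$ (where the bound is vacuous) together with a single invocation of Lemma~\ref{k+2} for the sole surviving configuration $f(v)=d$, $d_G(v)=k+2$.
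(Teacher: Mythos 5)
Your proof is correct and uses the same route as the paper: invoke Theorem~\ref{disch} to produce an exceptional vertex in a smallest counterexample, observe that type (1) cannot occur when $k=1$, reduce type (2) to the single configuration $f(v)=d$, $d_G(v)=3$, and dispose of it via Lemma~\ref{k+2} (with the inequality already vacuous for $d\le2$), then pass from Theorem~\ref{main} to the NDT statement using the uniform-case observations made earlier in the paper. The only cosmetic difference is the disposal of type (1): the paper argues via Lemma~\ref{cap0} that for $k=1$ no two $V_0$-vertices are adjacent, so $h(v)=0$ and a $V_0$-vertex always loses its full charge, whereas you cite the degree bound $d_G(v)<2k$ recorded in Theorem~\ref{disch}; both are valid.
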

\begin{proof}
When $k=1$, Lemma~\ref{cap0} implies that all neighbors of vertices with
capacity $0$ have positive capacity, so type (1) exceptions in
Theorem~\ref{disch} do not occur.  Type (2) exceptions require
$f(x)=d$ and $d_G(x)=3$, as remarked earlier.  The inequality for the
exception then reduces to $h(x)<1-2/d$, but Lemma~\ref{k+2} yields
$h(x)=d_G(x)=3$.
\end{proof}

In Corollary~\ref{cork1}, one can alternatively examine the discharging
directly.  By Lemma~\ref{k+2}, a vertex $x$ with $f(x)=d$ and $d_G(x)=k+2$
loses charge $(k+2)(d+1)$ when $d\ge2$ and $k\le2$, which exceeds its initial
charge $(k+1)(k+d)$.  When $(k,d)=(1,1)$, Lemma~\ref{k+2} does not apply, but
$x$ then loses at least $\FR32$ to each incident edge, and its initial charge
is $4$.

When $k=2$, again Theorem~\ref{disch} applies.  As we have noted, we must
prohibit $3$-vertices in $V_0$ with neighbors in $V_0$.  For vertices of
capacity $d$, we have noted that only $d_G(x)\in\{4,5\}$ is of concern,
Lemma~\ref{k+2} takes care of $d_G(x)=4$, and when $d_G(x)=5$ we only
need one neighbor in $V_0$.  We consider these remaining cases in two lemmas.

\eject

\begin{lemma}\label{k2cap0}
For $k=2$, a $3$-vertex in $V_0$ has no neighbor in $V_0$.
\end{lemma}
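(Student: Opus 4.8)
The plan is to suppose $(G,f)$ is a smallest counterexample, take a vertex $v$ with $f(v)=0$ and $d_G(v)=3$ that has a neighbor in $V_0$, and construct a $(k,f)$-decomposition of $G$. First I would pin down the local structure at $v$. Since $d_G(v)=3=k+1$, Lemma~\ref{2inV0} gives $\C{N_G(v)\cap V_0}\le1$, so $v$ has exactly one neighbor $w\in V_0$, joined to $v$ by a single edge (Lemma~\ref{cap0}). The two remaining edges at $v$, say $e_1$ and $e_2$, therefore have their other endpoints outside $V_0$; these endpoints may coincide, but then $e_1,e_2$ form a pair of parallel edges, which is permitted since Lemma~\ref{nofull} only rules out multiplicity $k+1$. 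Let $a$ be the endpoint of $e_1$ other than $v$; then $a\ne w$ and $f(a)>0$.

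Next I would pass to a smaller instance $(G',f')$: let $G'$ be obtained from $G-v$ by adding a new edge $wa$, and let $f'$ be the restriction of $f$ to $V(G')$. Then $\C{E(G')}=\C{E(G)}-2$, so $(G',f')$ is smaller than $(G,f)$ once it is shown to be an instance. For the absence of overfull sets, note that only a set containing both $w$ and $a$ gains an induced edge, and such a set $A'$ has $\C{A'}\ge2$, hence is not full in $G$ (Lemma~\ref{nofull}), so $\E{A'}_{G'}=\E{A'}+1\le(k+1)(\C{A'}-1)$. For feasibility of $f'$, the only sets $A'$ with $\rho'(A')\ne\rho(A')$ are those with $\{w,a\}\esub A'$; such an $A'$ misses $v$ and has size at least $2$, so it is nontrivial in $G$, and $a\in\partial A'$ since $v\notin A'$ is a neighbor of $a$. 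Because $a\notin V_0$, Lemma~\ref{mainlem} forces $\rho(A')>k(k+1)+d$, hence $\rho(A')\ge k(k+1)+d+1$; the new edge $wa$ has potential $-(k+1+d)$ (its endpoints are not both of capacity $0$), so $\rho'(A')=\rho(A')-(k+1+d)\ge k^2$. Every other set retains $\rho'(A')=\rho(A')\ge k^2$.

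Then I would lift a $(k,f')$-decomposition $(F',D')$ of $G'$ back to $G$. The crucial point is that $wa\notin D'$, because $w\in V_0$ gives $d_{D'}(w)\le f'(w)=0$. Hence $wa$ lies in one of the two forests in the decomposition of $F'$, say $F_1'$; replace it there by the path $w,v,a$ made of the edges $vw$ and $e_1$. Since $v$ is a new vertex this merely subdivides $wa$, so $F_1'$ stays a forest. Add $e_2$ to the other forest $F_2'$; since $v$ is isolated in $F_2'$, this is a pendant edge and keeps $F_2'$ a forest. Keeping $D=D'$ and noting $d_D(v)=0=f(v)$, we obtain a $(k,f)$-decomposition of $G$, contradicting that $(G,f)$ is a counterexample.

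The main obstacle, and the reason one adds the edge $wa$ incident to the $V_0$-vertex $w$ rather than joining the two positive-capacity neighbors of $v$, is controlling where the added edge lands in the recovered decomposition: an edge at $w$ is automatically excluded from $D'$, so it can be subdivided through $v$ inside a forest, and the third edge at $v$ then fits as a pendant into the remaining forest (with $k=2$, the three edges at $v$ are distributed $2+1$ among the two forests). The feasibility bookkeeping is tight but routine, using precisely the slack $\rho\ge k(k+1)+d+1$ supplied by Lemma~\ref{mainlem}; in particular no lower bound on $d$ is needed.
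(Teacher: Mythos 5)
Your proof is correct and uses the same construction as the paper: delete the $3$-vertex $v$, add an edge from its unique $V_0$-neighbor to one of its positive-capacity neighbors, lift the decomposition by subdividing the added edge through $v$, and place the third edge at $v$ into the remaining forest. The only substantive difference is in verifying feasibility of $f'$. The paper bounds $\rho(A'\cup\{x\})$ directly from $\rho'(A')$ and invokes Lemma~\ref{potkk+}, an estimate that works only because $2(k-1)(k+1)\le k(k+1)$ when $k\le 2$; you instead apply Lemma~\ref{mainlem} to $A'$ itself, noting $a\in\partial A'$ with $f(a)>0$ so $\rho(A')\ge k(k+1)+d+1$, which gives $\rho'(A')\ge k^2$ for any $k$. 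Your route is a little cleaner and makes explicit that the $k=2$ restriction in this lemma is used only in the final lifting step (distributing the $k+1=3$ edges at $v$ as a length-$2$ path plus a pendant edge among the $k=2$ forests), not in the potential bookkeeping.
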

\begin{proof}
Suppose $d_G(x)=3$ with $f(x)=0$ and $N_G(x)\cap V_0\ne\nul$.  Since $G$ has no
full set (Lemma~\ref{nofull}), $\C{N_G(x)}\ge2$.  Let $u$ be a neighbor of $x$
in $V_0$, and let $u'$ be another neighbor of $x$.  Form $G'$ by adding to
$G-x$ an edge joining $u$ and $u'$.  Since $G$ has no full subgraph, $G'$
has no overfull subgraph.

Let $f'$ be the restriction of $f$ to $V(G')$.  Always $\rho'(A')=\rho(A')$
unless $u,u'\in A'$.  If $\rho'(A')<k^2$, then
\begin{align*}
\rho(A'\cup\{x\})&\le \rho'(A')+\rho(x)+\rho(xu)+\rho(xu')-\rho(uu')\\
&\le \rho'(A')+k(k+1)-(k+1)\le 2(k-1)(k+1)
\end{align*}
When $k=2$, we have $2(k-1)(k+1)=k(k+1)$.  This contradicts Lemma~\ref{potkk+}
unless $A'\cup\{x\}=V(G)$.  In that case we add the potential of the third edge
at $x$, obtaining $\rho(A'\cup\{x\})\le (2k-3)(k+1)$, again a contradiction
when $k=2$.  Thus in all cases $\rho'(A')\ge k^2$.

Hence $(G',f')$ is a smaller instance, and $G'$ has a $(2,f')$-decomposition
$(F',D')$.  Since $f(u)=0$, the added edge $uu'$ lies in a forest in $F'$.
Replace it with a path of length $2$ through $x$, and add the third edge at $x$
to the other forest to complete a $(2,f)$-decomposition of $G$.
\end{proof}

Hence it remains only to consider $5$-vertices with capacity $d$.  Although it
is possible to prove that all neighbors of such a vertex $v$ lie in $V_0$, by
Theorem~\ref{disch} we only need the weaker conclusion that some neighbor is in
$V_0$.  That is, $5=2k+2-i$ with $i=1$, and by condition (2) in
Theorem~\ref{disch} it suffices to show $\Ccut{[\{v\},V_0]}\ge1$.
The proof in the next lemma is valid only for $d\ge3$.

\begin{lemma}\label{k25a}
For $k=2$ and $d\ge3$, if $f(v)=d$ and $d_G(v)=5$, then
$N_G(v)\cap V_0\ne\nul$.
\end{lemma}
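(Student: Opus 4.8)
The plan is to follow the template established by Lemmas~\ref{k+2} and~\ref{k2cap0}: assume $(G,f)$ is a smallest counterexample with a vertex $v$ satisfying $f(v)=d$, $d_G(v)=5$, and $N_G(v)\esub V(G)\setminus V_0$ (i.e.\ $h(v)=0$), then delete $v$, patch in a short path through $v$ using two of the deleted edges, and reach a contradiction with minimality. First I would record what the earlier lemmas already give us about $v$: since $f(v)=d\ge3$, Lemma~\ref{degree} is vacuous here, but Lemma~\ref{nofull} forbids any full set, so no neighbor of $v$ receives more than $k=2$ of the five edges at $v$; thus $v$ has at least three distinct neighbors. Pick two neighbors $u,u'$ of $v$ to be the endpoints of the replacement path $P=u\,v\,u'$; since every neighbor of $v$ has positive capacity, we will need to lower a capacity somewhere to compensate, just as $f'(y)=f(y)-1$ was used in Lemma~\ref{k+2}.

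The construction: form $G'$ from $G-v$ by adding one copy of the edge $uu'$, and define $f'$ by decreasing $f$ by $1$ at one suitably chosen neighbor $y$ of $v$ (with $f(y)>0$, which is automatic), leaving $f$ unchanged elsewhere. The two things to verify are that $G'$ has no overfull set and that $f'$ is feasible. Absence of overfull sets follows from Lemma~\ref{nofull} exactly as before: $G$ has no full set, so adding $uu'$ creates no overfull set. For feasibility, the only sets $A'$ whose potential drops are those containing both $u$ and $u'$ (the new edge costs $k+1$ or $k+1+d$) or containing $y$ (the capacity drop costs $k+1$); for a set containing all three, the drop is at most $(k+1)+(k+1+d)=2k+2+d$. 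To beat this I would compare $A'$ with $A^*=A'\cup\{v\}$ in $G$: each edge from $v$ into $A'$ has potential $-(k+1+d)$ since $f(v)>0$, and if $r$ of the five edges at $v$ go into $A'$ then, accounting for $\rho(v)=(k+1)(k+d)$ and the $(k+1)$ lost at $y$,
\begin{align*}
\rho'(A') &\ge \rho(A^*)-\rho(v)-(k+1)+\rho'(uu')+r(k+1+d)\\
&\ge \rho(A^*)-(k+1)(k+d+1)+(r-1)(k+1+d).
\end{align*}
So $\rho'(A')\ge\rho(A^*)\ge k^2$ (using Lemma~\ref{potkk+}) once $r\ge k+1=3$ when $\rho(A^*)>k(k+1)+d$, and $r\ge k+2$ when $\rho(A^*)\le k(k+1)+d$. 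The case $\rho(A^*)\le k(k+1)+d$ with $A^*\ne V(G)$ forces $\partial A^*\esub V_0$ by Lemma~\ref{mainlem}, hence (since $f(v)>0$) $N_G(v)\esub A'$, giving $r=5\ge k+2$; the case $A^*=V(G)$ also gives $r=5$. The remaining case $\rho(A^*)>k(k+1)+d$ demands only $r\ge3$, which we can guarantee by choosing $u,u',y$ among the (at least three) distinct neighbors of $v$ so that at least three of the five edges at $v$ land in any set containing $\{u,u',y\}$ — here the hypothesis $d_G(v)=5$ together with ``no full set'' (each neighbor absorbs $\le2$ edges) is what makes the bookkeeping work, and this is the step where $d\ge3$ enters, since for $d=2$ the compensating capacity drop is too expensive relative to the potential budget $k(k+1)+d$.

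Given feasibility, $(G',f')$ is a smaller instance, so it has a $(k,f')$-decomposition $(F',D')$. Since $f(v)=d\ge3>d_{D'}(u)$ is not guaranteed, but $f(v)\ge2$ suffices to reinsert: replace the edge $uu'$ in whichever forest (or in $D'$) contains it by the path $P=u\,v\,u'$ of length $2$ through $v$, and distribute the remaining $k=2$ edges at $v$ among the other two parts. If $P$ was placed in $D'$ this already uses two units of capacity at $v$, which is $\le f(v)$ since $d\ge3$; the other edges go to the two forests, which are edge-disjoint from each other at $v$. If $P$ was placed in a forest of $F'$, we must route at least one of the remaining three edges at $v$ into $D'$; that edge can be taken to be a copy of $vy$, legal because $f'(y)=f(y)-1$ frees a unit at $y$ and $f(v)=d\ge1$; the last two edges go to forests. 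As in Lemma~\ref{k+2}, the one subtlety is if $y$ coincides with $u$ or $u'$ and the path $P$ ``uses up'' the edge $vy$; but then $\{u,u'\}$ meets $\{y\}$ and, since no full set of size $2$ exists (Lemma~\ref{nofull}), $G$ has at least two copies of the edge $vy$, so a second copy is available to send to $D'$. The main obstacle I anticipate is precisely the feasibility verification in the case $\rho(A^*)>k(k+1)+d$ with $r=3$: one must argue that the three edges from $v$ into $A'$, together with $\rho'(z)$-type gains, exactly cover the loss $(k+1)+(k+1+d)$ from the new edge and the capacity drop — this is where $d\ge3$ is essential and where the $k\le2$ restriction (so that $k+1=3$ matches the guaranteed three neighbors) is used, and it is the reason $d=2$ must be handled separately.
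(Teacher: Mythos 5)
The proposal follows the template of Lemma~\ref{k+2} too literally and this is where it breaks. You form $G'$ by adding a \emph{single} edge $uu'$ to $G-v$, and then reinsert $v$ by replacing $uu'$ with the length-$2$ path $u\,v\,u'$. But since $d_G(v)=5$, this absorbs only $2$ of the $5$ edges at $v$, leaving \emph{three} edges to place (you first write ``the remaining $k=2$ edges,'' then later correctly say three). These three edges cannot in general be routed: among $F_1', F_2', D'$, the path $P$ already occupies one part, the edge into $D'$ needs spare capacity at its other end (only $y$ has that, because of $f'(y)=f(y)-1$), and placing two of the remaining edges into a single unrestricted forest risks closing a cycle when their other endpoints lie in the same component. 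No choice of $u,u',y$ removes this obstruction. The paper's proof avoids it by adding \emph{two} edges to $G-x$ --- a matching of size $2$ on $N(x)$ when $|N(x)|\ge4$, or two edges from the repeated neighbor when $|N(x)|=3$ --- reserving four of the five edges at $x$ so they are absorbed by two length-$2$ paths, leaving exactly one unreserved edge $xy$ to be slotted into the third part. That construction is also what actually forces $d\ge3$: when both added edges land in the same forest and that forest is $D'$, the reinsertion gives $x$ three incident edges in $D'$. You instead place the $d\ge3$ requirement in the feasibility bookkeeping, which is not where it enters.

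There is also a secondary gap in your feasibility check. Your unified inequality $\rho'(A')\ge \rho(A^*)-(k+1)(k+d+1)+(r-1)(k+1+d)$ is padded by an extra $-(k+1)$ when $y\notin A'$, and in the case $u,u'\in A'$, $y\notin A'$ you only get $r\ge2$ from $u,u'$, which is not enough. This case needs a separate argument: one observes $\rho(A')\le k(k+1)+d$ and that $u,u'\in\partial A'$ with positive capacity, contradicting Lemma~\ref{cobound} (or Lemma~\ref{mainlem}). The paper invokes Lemma~\ref{cobound} explicitly for exactly this reason.
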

\begin{proof}
Let $x$ be a $5$-vertex with capacity $d$, and let $U=N_G(x)$ and
$U'=U\cup\{x\}$.  Suppose $N_G(x)\cap V_0=\nul$.  Since $G$ has no edge with
multiplicity at least $3$, we have $\C U\ge 3$.  If equality holds, then some
$u\in U$ is the endpoint of at least two edges incident to $x$.

Form $G'$ from $G-x$ by adding a matching on $U$ if $\C U\ge 4$, and
adding an edge from $u$ to each other vertex of $U$ if $\C U=3$.  For each
endpoint of each added edge, reserve an edge joining it to $x$, thereby
reserving four of the five edges incident to $x$ (if $\C U=3$, then two copies
of $ux$ are reserved).  Define $f'$ on $V(G')$ by $f'(y)=f(y)-1$ and
$f'(v)=f(v)$ for $v\in V(G')-\{y\}$, where $y$ is the endpoint in $U$ of the
unreserved edge at $x$.

Since $G$ has no full set, an overfull set $A'$ in $G'$ must contain the
endpoints of both added edges.  The set $A'\cup\{x\}$, which has one more
vertex and induces at least $\E{A'}_{G'}+4$ edges, is then full in $G$, a
contradiction.  (This argument uses $k\le5$.)  Hence $G'$ has no overfull set.  

Now suppose $\rho'(A')<k^2=4$.  By Lemma~\ref{potkk+}, $G'[A']$ must contain
at least one added edge.  If it contains one added edge $e$ but not the vertex
$y$, then $\rho(A')\le k(k+1)+d$; the edges joining $e$ to $x$ then contradict
Lemma~\ref{cobound}.

If $G[A']$ contains exactly one added edge and also $y$, then $A'\cup \{x\}$
induces at least three edges incident to $x$.  Now
\begin{align*}
\rho(A'\cup\{x\})&\le \rho'(A')+\rho(x)-3(k+1+d)+(k+1+d)+(k+1)\\
&=\rho'(A')+(k+1)(k-1)+d(k-1)\le 6+d=k(k+1)+d,
\end{align*}
using $k=2$.  Again we contradict Lemma~\ref{cobound}.  If $G'[A']$ contains
both added edges, then moving to $G[A'\cup\{x\}]$ loses two edges instead of
one in $A'$ but also gains four edges instead of three at $x$.  If $A'$ also
contains $y$, then the bound increases by $k+1$ for $f(y)$ but decreases by
$k+1+d$ for inducing the fifth edge at $x$.  Hence in each case we obtain
$\rho'(A')\ge k^2$ by essentially the same contradiction.

Hence $(G',f')$ is a smaller instance, and $G'$ has a $(k,f')$-decomposition
$(F',D')$.  If the two added edges lie in distinct forests in the decomposition,
then replace them by paths of length $2$ through $x$ with the same endpoints,
and add the edge $xy$ to the third forest.  This causes no problem when the 
third forest is $D'$, since $f'(y)=f(y)-1$.

If the two added edges lie in the same forest, then deleting them yields at
least three components in that forest, with three endpoints of the added edges
in distinct components.  Extend that forest by edges from $x$ to those three
(distinct) specified vertices.  Add the remaining two edges at $x$ to the other
two forests.  Again, if the forest containing the two specified edges is not
$D'$, then the unreserved edge $xy$ can be added to $D'$.
\end{proof}

The last step in this proof is not valid for $d=2$, because we may be giving $x$
three incident edges in the $d$-bounded forest.  Fortunately, when $d=2$ we
do not need the conclusion of Lemma~\ref{k25a}; the discharging is always
strong enough.

\begin{corollary}
The NDT Conjecture is true when $k=2$ and $d\ge2$.
\end{corollary}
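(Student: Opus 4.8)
The plan is to combine Theorem~\ref{disch} with the three lemmas of this
section (Lemmas~\ref{k+2}, \ref{k2cap0}, and~\ref{k25a}) exactly as in the
proof of Corollary~\ref{cork1}, but now for $k=2$ and $d\ge2$. By
Theorem~\ref{disch}, a smallest counterexample $(G,f)$ must contain a vertex
$v$ satisfying one of the two exceptional conditions, so it suffices to show
that neither exception can occur. First I would dispose of type~(1): a vertex
$v$ with $f(v)=0$ and $h(v)>2(d_G(v)-k-1)\FR k{k-1}=4(d_G(v)-3)$. Since
$\delta(G)\ge3$ by Lemma~\ref{edgeconn}, we have $d_G(v)\ge3$; for
$d_G(v)\ge4=2k$ the exception cannot occur (the remark after
Theorem~\ref{disch}), and for $d_G(v)=3$ the condition becomes $h(v)>0$, i.e.
$v$ has a neighbor in $V_0$, which is precisely what Lemma~\ref{k2cap0}
forbids.

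Next I would rule out type~(2): a vertex $v$ with $f(v)=d$ and
$h(v)<\FR{(2k+2-d_G(v))(k+1+d)-2(k+1)}{d+1-k}=\FR{(6-d_G(v))(3+d)-6}{d-1}$. By
Lemma~\ref{fullcap}, $d_G(v)\ge k+2=4$, and by Theorem~\ref{disch} we may
assume $d_G(v)\le2k+1=5$, so $d_G(v)\in\{4,5\}$. When $d_G(v)=5$ the bound
reduces to $h(v)<\FR{d-3}{d-1}<1$, so we only need $h(v)\ge1$, that is, one
neighbor of $v$ in $V_0$; for $d\ge3$ this is supplied by Lemma~\ref{k25a}, and
for $d=2$ the inequality $h(v)<\FR{2-3}{2-1}=-1$ is vacuous, so no exception
arises. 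When $d_G(v)=4$ the bound is $h(v)<\FR{2(3+d)-6}{d-1}=\FR{2d}{d-1}$,
which equals $3$ when $d=3$ and exceeds $3$ for $d=2$ (where it is $4$). But
Lemma~\ref{k+2} applies (here $d_G(v)=k+2$ and $f(v)=d\ge2$) and gives
$N_G(v)\esub V_0$, hence $h(v)=d_G(v)=4$, which violates the exceptional
inequality in every subcase. Thus no type~(2) exception occurs either.

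Since neither exception of Theorem~\ref{disch} can occur in a smallest
counterexample when $k=2$ and $d\ge2$, the conclusion of Theorem~\ref{disch}
fails, so there is no smallest counterexample; hence $(G,f)$ is
$(k,f)$-decomposable whenever $f$ is feasible and $G$ has no overfull set. In
particular, as observed before Conjecture~\ref{feasfullconj}, any graph with
$\Arb(G)\le k+\FR d{k+d+1}$ is $(k,d)$-sparse and overfull-set-free, so taking
$f$ to be the uniform capacity $d$ gives a $(2,d)$-decomposition, establishing
the NDT Conjecture for $k=2$ and $d\ge2$. The only bookkeeping step requiring
care is checking that the threshold in condition~(2) really does exceed the
value $h(v)=d_G(v)$ forced by the lemmas in each of the finitely many residual
cases $d_G(v)\in\{4,5\}$, $d\ge2$; since everything reduces to comparing two
explicit linear expressions in $d$, there is no real obstacle, only the need to
handle $d=2$ separately because Lemma~\ref{k25a} is stated only for $d\ge3$.
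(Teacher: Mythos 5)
Your proposal is correct and follows the same route as the paper: Theorem~\ref{disch} reduces everything to the four residual configurations, which Lemmas~\ref{k+2}, \ref{k2cap0}, and~\ref{k25a} eliminate. The only cosmetic difference is in the $5$-vertex, $d=2$ subcase, where you observe that the threshold in condition~(2) of Theorem~\ref{disch} is negative (hence vacuous), while the paper re-derives this by counting that such a vertex loses at least $5\cdot\frac52=12.5>12$ of charge; these are the same fact stated two ways.
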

\begin{proof}
As we have remarked, Lemma~\ref{k25a} completes the proof for $k=2$ and $d\ge3$.
When $d=k=2$, a vertex with capacity $2$ has potential $12$, and it loses
charge at least $5/2$ along every edge by the rules in Theorem~\ref{disch}.
Hence a $5$-vertex loses at least $12.5$ and ends with negative charge.
The rest of the proof remains the same as for $d\ge3$.
\end{proof}

\centerline{{\bf\large Acknowledgment}}
We thank Hehui Wu for several useful comments.

{\small

}

\end{document}